\newtheorem{theorem}{Theorem}
\newtheorem{proposition}[theorem]{Proposition}
\newtheorem{claim}{Claim}
\newcommand{\R}{\mathbb R}
\newcommand{\adj}{\thicksim}
\newcommand{\nadj}{\nsim}
\begin{document}

\title[More intrinsically knotted graphs with 22 edges]
{More intrinsically knotted graphs with 22 edges and the restoring method}
\author[H. Kim]{Hyoungjun Kim}
\address{Institute of Mathematical Sciences, Ewha Womans University, Seoul 03760, Korea}
\email{kimhjun@korea.ac.kr}
\author[T. Mattman]{Thomas Mattman}
\address{Department of Mathematics and Statistics, California State University, Chico, Chico CA 95929-0525, USA}
\email{TMattman@CSUChico.edu}
\author[S. Oh]{Seungsang Oh}
\address{Department of Mathematics, Korea University, Seoul 02841, Korea}
\email{seungsang@korea.ac.kr}

\thanks{2010 Mathematics Subject Classification: 57M25, 57M27, 05C10}
\thanks{The corresponding author(Seungsang Oh) was supported by the National Research Foundation of Korea (NRF) grant funded by the Korea government(MSIP) (No. NRF-2014R1A2A1A11050999).}
\thanks{The first author was supported by Basic Science Research Program through the National Research Foundation of Korea (NRF) funded by the Ministry of Education(2009-0093827).}

\begin{abstract}
A graph is called intrinsically knotted if every embedding of the graph contains a knotted cycle.
Johnson, Kidwell and Michael, and, independently, Mattman showed that intrinsically knotted graphs have 
at least 21 edges.
Recently Lee, Kim, Lee and Oh, and, independently, Barsotti and Mattman, 
showed that $K_7$ and the 13 graphs obtained from $K_7$ by $\nabla Y$ moves are 
the only intrinsically knotted graphs with 21 edges.
Also Kim, Lee, Lee, Mattman and Oh showed that there are exactly three triangle-free intrinsically knotted 
graphs with 22 edges having at least two vertices of degree 5.
Furthermore, there is no triangle-free intrinsically knotted graph with 22 edges 
that has a vertex with degree larger than 5.
In this paper we show that there are exactly five triangle-free intrinsically knotted graphs with 22 edges 
having exactly one degree 5 vertex.
These are Cousin 29 of the $K_{3,3,1,1}$ family, Cousins 97 and 99 of the $E_9+e$ family and 
two others that were previously unknown.
\end{abstract}

\maketitle

\section{introduction} \label{sec:int}

Throughout the paper we will take an embedded graph to mean a graph embedded in $\R^3$.
We call a graph $G$ {\em intrinsically knotted\/} 
if every embedding of the graph contains a non-trivially knotted cycle.
Conway and Gordon~\cite{CG} showed that $K_7$, the complete graph on seven vertices, 
is an intrinsically knotted graph.
Foisy~\cite{F} showed that $K_{3,3,1,1}$ is also intrinsically knotted.
A graph $H$ is a {\em minor\/} of another graph $G$ 
if it can be obtained from a subgraph of $G$ by contracting some edges.
If a graph $G$ is intrinsically knotted and has no proper minor that is intrinsically knotted, 
$G$ is said to be {\em minor minimal intrinsically knotted\/}.
Robertson and Seymour's~\cite{RS} Graph Minor Theorem implies 
that there are only finitely many minor minimal intrinsically knotted graphs, 
but finding the complete set is still an open problem.
A $\nabla Y$ {\em move\/} is an exchange operation on a graph 
that removes all edges of a 3-cycle $abc$ and 
then adds a new vertex $v$ and three new edges $va, vb$ and $vc$.
Its reverse operation is called a $Y \nabla$ {\em move\/} as in Figure~\ref{fig:Y}.

\begin{figure}[h]
\includegraphics{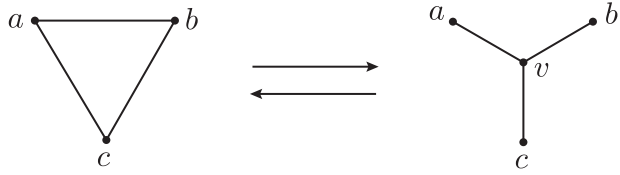}
\caption{$\nabla Y$ and $Y \nabla$ moves}
\label{fig:Y}
\end{figure}

The $\nabla Y$ move preserves intrinsic knottedness~\cite{MRS}. 
Sachs~\cite{S} observed this for intrinsic linking and the proof for intrinsic knotting is similar.
With this in mind, we concentrate on triangle-free graphs in this paper.
It is known \cite{CG, F, KS} that $K_7$ and the 13 graphs obtained from $K_7$ by $\nabla Y$ moves, 
and $K_{3,3,1,1}$ and the 25 graphs obtained from $K_{3,3,1,1}$ by $\nabla Y$ moves are 
minor minimal intrinsically knotted.

Johnson, Kidwell and Michael~\cite{JKM}, and, independently, Mattman~\cite{M} showed 
that intrinsically knotted graphs have at least 21 edges.
Recently two groups, working independently, showed 
that $K_7$ and the 13 graphs obtained from $K_7$ by $\nabla Y$ moves are 
the only minor minimal intrinsically knotted graphs with 21 edges.
Lee, Kim, Lee and Oh~\cite{LKLO} proceeded by showing 
that the only triangle-free intrinsically knotted graphs with 21 edges are $H_{12}$ and $C_{14}$.
Barsotti and Mattman~\cite{BM} relied on connections with 2-apex graphs.

We say that a graph is {\em intrinsically knotted or completely 3-linked\/} 
if every embedding of the graph contains a nontrivial knot or a 3-component link 
each of whose 2-component sublinks is non-splittable.
Hanaki, Nikkuni, Taniyama and Yamazaki~\cite{HNTY} constructed 20 graphs 
derived from $H_{12}$ and $C_{14}$ by $Y \nabla$ moves, 
and they showed that these graphs are minor minimal intrinsically knotted or completely 3-linked graphs.
Furthermore they showed that the six graphs $N_9$, $N_{10}$, $N_{11}$, $N'_{10}$, $N'_{11}$ and $N'_{12}$ 
are not intrinsically knotted.
This was also shown by Goldberg, Mattman and Naimi~\cite{GMN}, independently.

We say two graphs $G$ and $G'$ are {\em cousins\/} 
if $G'$ is obtained from $G$ by a finite sequence of $\nabla Y$ and $Y \nabla$ moves.
The set of all cousins of $G$ is called the $G$ {\em family\/}.
The $K_{3,3,1,1}$ family consists of 58 graphs. 
As above, 26 of these graphs were previously known to be minor minimal intrinsically knotted \cite{F, KS}.
Goldberg et al.~\cite{GMN} showed that the remaining 32 graphs are also minor minimal intrinsically knotted. 
They also studied the $E_9+e$ family which has 110 graphs.
As in Figure~\ref{fig:e9}, the graph $E_9+e$ is obtained from $N_9$ by adding the new edge $e$.
They showed that all of these graphs are intrinsically knotted, 
and exactly 33 of them are minor minimal intrinsically knotted.

\begin{figure}[h]
\includegraphics{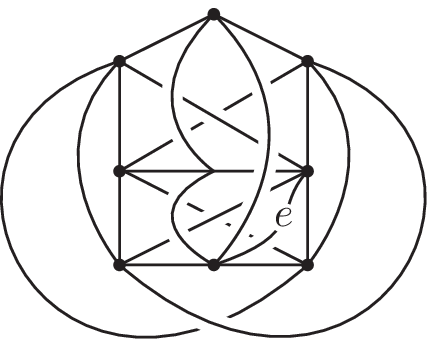}
\caption{$E_9+e$}
\label{fig:e9}
\end{figure}

By combining the $K_{3,3,1,1}$ family and the $E_9+e$ family, we have 168 graphs of size 22 
that were already known to be intrinsically knotted. 
Among these, 14 are triangle-free, namely, the four cousins 29, 31, 42 and 53 in the $K_{3,3,1,1}$ family 
and the ten cousins 43, 56, 87, 89, 94, 97, 99, 105, 109 and 110 in the $E_9+e$ family.
In earlier work~\cite{KLLMO} we showed that  
a triangle-free intrinsically knotted graph with 22 edges has no vertices of degree 6 or more 
and classified those that have at least two vertices of degree 5.
Cousins 94 and 110 of the $E_9+e$ family are two of the three examples.

In this paper we determine graphs which have exactly one vertex of degree 5. 
Three examples, shown in Figure~\ref{fig:3eg}, are Cousin 29 of the $K_{3,3,1,1}$ family 
and Cousins 97 and 99 of the $E_9+e$ family.
While Cousin 29 is minor minimal intrinsically knotted, 
Cousins 97 and 99 have $H_{11}$ in the $K_7$ family as a minor, 
as may be seen by contracting the edges $e_1$ and $e_2$, respectively.
The remaining nine examples in these two families have maximal degree 4.
 
\begin{figure}[h]
\includegraphics{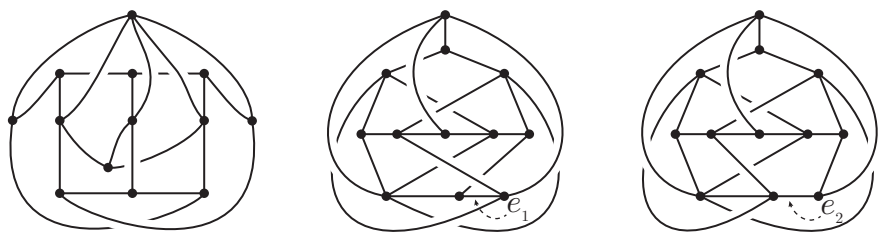}
\caption{Cousin 29 of the $K_{3,3,1,1}$ family and Cousins 97 and 99 of the $E_9+e$ family}
\label{fig:3eg}
\end{figure}

We introduce two previously unknown graphs $U_{12}$ and $U'_{12}$, shown in Figure~\ref{fig:new}, 
which are triangle-free intrinsically knotted (but not minor minimal) graphs 
with 22 edges having a unique vertex of degree 5.
Note that these graphs are not members of the $K_{3,3,1,1}$ family or the $E_9+e$ family.
Indeed, if we contract the edge $e_1$ of $U_{12}$ (similarly $e_2$ of $U'_{12}$),
we get $H_{11}$ in the $K_7$ family.

\begin{figure}[h]
\includegraphics{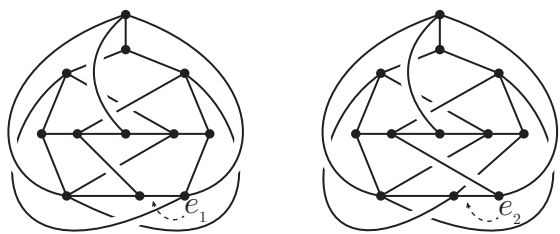}
\caption{New intrinsically knotted graphs $U_{12}$ and $U'_{12}$}
\label{fig:new}
\end{figure}

Our overall goal is to determine the set of intrinsically knotted graphs with 22 edges.
Kim, Lee, Lee, Mattman and Oh found all triangle-free intrinsically knotted graphs with 22 edges 
that have at least two vertices of degree 5 
and showed there are none with maximum degree larger than 5~\cite{KLLMO}.
In this paper we find all triangle-free intrinsically knotted graphs with 22 edges 
that have exactly one degree 5 vertex.

\begin{theorem} \label{thm:main}
There are exactly five triangle-free intrinsically knotted graphs 
with 22 edges having a unique degree 5 vertex. 
They are Cousin 29 of the $K_{3,3,1,1}$ family, Cousins 97 and 99 of the $E_9+e$ family 
and two previously unknown graphs, $U_{12}$ and $U'_{12}$, shown in Figure~\ref{fig:new}.
\end{theorem}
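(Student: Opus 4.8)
The strategy combines the classification of $21$-edge intrinsically knotted graphs recalled above with the standard fact (used in \cite{BM, GMN}) that a $2$-apex graph, one that becomes planar after deleting two vertices, is never intrinsically knotted. Let $G$ be triangle-free and intrinsically knotted with $22$ edges and a unique vertex $v$ of degree $5$; as is customary we may assume $G$ has minimum degree $3$, since a vertex of degree $1$ or $2$ can be deleted or smoothed to leave an intrinsically knotted graph with at most $21$ edges. By \cite{KLLMO} the maximum degree is $5$, so every other vertex has degree $3$ or $4$; writing $a$ and $b$ for the numbers of vertices of these two degrees, $3a+4b=39$, so $(a,b,|V(G)|)\in\{(13,0,14),(9,3,13),(5,6,12),(1,9,11)\}$, and $G$ is connected, since otherwise it would have an intrinsically knotted component of at least $21$ edges together with a further triangle-free component of minimum degree $3$, hence at least nine more edges, exceeding the bound of $22$. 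Being intrinsically knotted, $G$ is not $2$-apex: $G-\{x,y\}$ is non-planar for every pair $x,y$.

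The first step is a structural dichotomy, which is the engine of the \emph{restoring method}. Since $G$ is intrinsically knotted it has some minor-minimal intrinsically knotted graph $M$ as a minor, and as every intrinsically knotted graph has at least $21$ edges while $|E(G)|=22$, we get $|E(M)|\in\{21,22\}$. If $|E(M)|=22$ then $M$, being a minor of $G$ with as many edges as $G$ and $G$ having no isolated vertex, must equal $G$, so $G$ is itself minor-minimal (\textbf{Case A}). If $|E(M)|=21$ then $M$ is one of the fourteen graphs of the $K_7$ family, and since $|E(M)|=|E(G)|-1$ while $\delta(G)\ge 3$ (so deleting a vertex would remove at least three edges, and a contraction removes exactly one because $G$ is triangle-free), we have $M=G-e$ or $M=G/e$ for a single edge $e$ (\textbf{Case B}); thus $G$ is recovered from $M$ by restoring the edge $e$, or by splitting one vertex of $M$ into two joined by a new edge. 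In the first sub-case $M\subseteq G$ is triangle-free, hence a triangle-free member of the $K_7$ family (such as $H_{11}$ or $H_{12}$), and $e$ must join a degree-$4$ vertex to a degree-$3$ vertex with no common neighbour, so that $G$ stays triangle-free with exactly one degree-$5$ vertex; in the second sub-case triangle-freeness and the degree data of $G$ again leave only finitely many pairs (triangle-free core, splitting). Enumerating these restorations shows that Case B produces exactly Cousins $97$ and $99$ of the $E_9+e$ family and the graphs $U_{12}$ and $U'_{12}$ of Figure~\ref{fig:new}, each in particular a vertex split of $H_{11}$ reversing the contraction of $e_1$ or $e_2$, and nothing else; every graph so obtained has an intrinsically knotted minor, hence is intrinsically knotted.

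It remains to treat Case A, in which $G$ is minor-minimal and carries no convenient intrinsically knotted minor, so we must restore from scratch. Delete the unique degree-$5$ vertex and set $H:=G-v$. Then $H$ is triangle-free with exactly $17$ edges and maximum degree at most $4$, has at most five vertices of degree $2$ and none of lower degree, and, taking $x=v$ in the non-$2$-apex property of $G$, is not $1$-apex: $H-w=G-\{v,w\}$ is non-planar for every vertex $w$. The plan is to enumerate all such $H$ up to isomorphism; this is a finite task, and a feasible one, because the non-$1$-apex condition forces $H$ to contain several overlapping Kuratowski subgraphs whose vertex sets have empty intersection, so $H$ is assembled from a small non-planar core together with the few further edges and vertices permitted by the budget of $17$ edges, discarding anything that contains a triangle. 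One then restores $v$ in every admissible way: join a new vertex to an independent $5$-subset of $H$ that contains all of its degree-$2$ vertices, and retain only those results that are triangle-free and not $2$-apex. The conclusion is that the only survivor is Cousin $29$ of the $K_{3,3,1,1}$ family, which is indeed minor-minimal intrinsically knotted \cite{GMN}. Cases A and B together give precisely the five graphs of the statement, and each is intrinsically knotted: Cousin $29$ and Cousins $97$, $99$ by \cite{GMN}, and $U_{12}$, $U'_{12}$ because contracting $e_1$ (respectively $e_2$) yields the intrinsically knotted graph $H_{11}$.

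The principal obstacle is the enumeration in Case A: controlling the combinatorial explosion, both in listing the $17$-edge non-$1$-apex graphs $H$ and, more seriously, in sifting their restorations, since a typical $H$ admits many independent $5$-subsets and each resulting candidate requires a planarity test on each of its vertex-deleted subgraphs. Keeping this within reach demands exploiting the automorphisms of every $H$ aggressively and organising the case analysis by the isomorphism type of the non-planar core of $H$ and of the neighbourhood of $v$. A smaller but still delicate point is to make the reduction to minimum degree $3$, and the minor bookkeeping underlying Case B, fully rigorous, in particular to determine, for the non-$2$-apex candidates that lie outside the known families, whether each one is intrinsically knotted (by exhibiting an intrinsically knotted minor, or by matching it with one of the non-intrinsically-knotted graphs of \cite{HNTY, GMN}).
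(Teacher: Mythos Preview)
Your overall plan is logically coherent, but it is a strategy rather than a proof: neither enumeration is actually performed, and the Case~A branch, as written, cannot yield what you claim. The filter you describe for Case~A is ``triangle-free and not $2$-apex''; since Cousins~97, 99, $U_{12}$, $U'_{12}$ are all intrinsically knotted, they are all non-$2$-apex, hence all of them (not just Cousin~29) survive that filter. The minor-minimality hypothesis in Case~A is never invoked in the enumeration itself, so the split into Cases~A and~B does no actual work: the Case~A search already subsumes Case~B. To salvage the dichotomy you would have to add, inside Case~A, a further check for a $K_7$-family minor and discard those graphs, and then still certify that any remaining non-$2$-apex candidate is either Cousin~29 or not intrinsically knotted---a step you flag as ``delicate'' but do not address. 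Likewise, in Case~B the assertion that exactly four isomorphism classes arise from the fourteen $K_7$-family graphs by a single edge addition or vertex split is the entire content of that case, and it is simply stated, not verified; the vertex-split sub-case alone involves all fourteen graphs (most with triangles, so the split graph need not be triangle-free a priori), each with many vertices of varying degree.

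By contrast, the paper does not use the minor-minimal versus non-minor-minimal dichotomy at all. It proceeds by a direct structural case analysis: first by the degree-sequence type $(|V_4|,|V_3|)\in\{(0,13),(3,9),(6,5),(9,1)\}$, then by $|V_4(a)|$, then by the pattern of the ``extra edges'' in the second neighbourhood of the degree-$5$ vertex $a$. The engine is the count equation for $|\widehat{E}_{a,b}|$ together with Proposition~\ref{prop:planar}: for well-chosen $b$ one gets $|\widehat{E}_{a,b}|\le 9$, and unless $\widehat{G}_{a,b}\cong K_{3,3}$ the graph is $2$-apex and discarded. The paper's \emph{restoring method} is precisely the reconstruction $K_{3,3}=\widehat{G}_{a,b}\to G_{a,b}\to G$ in the residual cases, using the $K_{3,3}$ bipartition to pin down the remaining edges; it is not the ``restore an edge or a vertex'' operation you describe. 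This direct approach never needs the detailed structure of the fourteen $K_7$-family graphs, never needs to decide intrinsic knottedness of an unknown candidate, and produces the five graphs one at a time as the unique survivors of specific subcases.
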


To complete the classification, it remains to investigate graphs with maximal degree 4. 
It's easy to see that there must be at least two vertices of that degree, 
as in the nine remaining examples in the $K_{3,3,1,1}$ and $E_9+e$ families.

In Section~\ref{sec:ter}, we introduce terminology and give an overview of the structure of our proof.
Sections~\ref{sec:pro1} and \ref{sec:pro2} are devoted to the proof of the main theorem.
Especially, in Section~\ref{sec:pro2} we introduce the restoring method used frequently in crucial parts.

\section{terminology} \label{sec:ter}

The notation and terminology used in this paper follow those employed 
in the previous paper~\cite{KLLMO}.
Let $G = (V,E)$ denote a triangle-free graph with 22 edges 
where $V$ and $E$ are the sets of vertices and edges, respectively.
For distinct vertices $a$ and $b$, 
let $G \backslash \{ a,b \}$ denote the graph obtained from $G$ by deleting two vertices $a$ and $b$.
Deleting a vertex means removing the vertex, interiors of all edges adjacent to the vertex 
and remaining isolated vertices.
Let $G_{a,b}$ denote the graph obtained from $G \backslash \{ a,b \}$ by deleting all degree 1 vertices, 
and $\widehat{G}_{a,b}=(\widehat{V}_{a,b}, \widehat{E}_{a,b})$ denote the graph obtained from $G_{a,b}$ 
by contracting edges adjacent to degree 2 vertices, 
one by one repeatedly, until no degree 2 vertex remains.
The degree of $a$, denoted by $\deg(a)$, is the number of edges adjacent to $a$.
We say that $a$ is adjacent to $b$, denoted by $a \adj b$, if there is an edge connecting them.
If they are not adjacent, we denote $a \nadj b$.
We need some notations to count the number of edges of $\widehat{G}_{a,b}$.

\begin{itemize}
\item $V(a)=\{c \in V\ |\ a \adj c \}$  
\item $V_n = \{ c \in V\ |\ {\rm deg}(c)=n \}$
\item $V_n(a)=\{c \in V\ |\ a \adj c,\ \deg(c)=n \}$
\item $V_n(a,b)=V_n(a) \cap V_n(b)$
\item $V_Y(a,b)=\{c \in V\ |\ \exists \ d \in V_3(a,b) \ \mbox{such that} \ c \in V_3(d) \setminus \{a,b\}\}$
\item $E(a)=\{ e \in E\ |\ e \,\, {\rm is \,\, adjacent \,\, to}\,\, a \}$
\item $E(V') = \cup_{a \in V'} E(a)$ for a subset $V'$ of $V$
\end{itemize}

In $G \setminus \{a,b\}$, each vertex of $V_3(a,b)$ has degree 1.
Also each vertex of $V_3(a), V_3(b)$, (but not of $V_3(a,b)$) and $V_4(a,b)$ has degree 2.
Therefore, to derive $\widehat{G}_{a,b}$, all edges adjacent to $a,b$ and $V_3(a,b)$ are removed from $G$,
followed by contracting one of the remaining two edges adjacent to each vertex of
$V_3(a)$, $V_3(b)$, $V_4(a,b)$ and $V_Y(a,b)$ as shown in Figure~\ref{fig5}(a).
We have the following equation counting the number of edges of $\widehat{G}_{a,b}$,
called the {\em count equation\/};
$$|\widehat{E}_{a,b}| = 22 - |E(a)\cup E(b)| - (|V_3(a)|+|V_3(b)|-|V_3(a,b)|+|V_4(a,b)|+|V_Y(a,b)|).$$

For short, we write $NE(a,b) = |E(a)\cup E(b)|$ and $NV_3(a,b) = |V_3(a)|+|V_3(b)|-|V_3(a,b)|$, 
which is non-negative.
Note that the derivation of $\widehat{G}_{a,b}$ must be handled in slightly different way
when there is a vertex $c$ in $V$ such that more than one vertex of $V(c)$ is contained
in $V_3(a,b)$ as in Figure~\ref{fig5}(b).
In this case we usually delete or contract more edges even though $c$ is not in $V_Y(a,b)$.

\begin{figure}[h]
\includegraphics{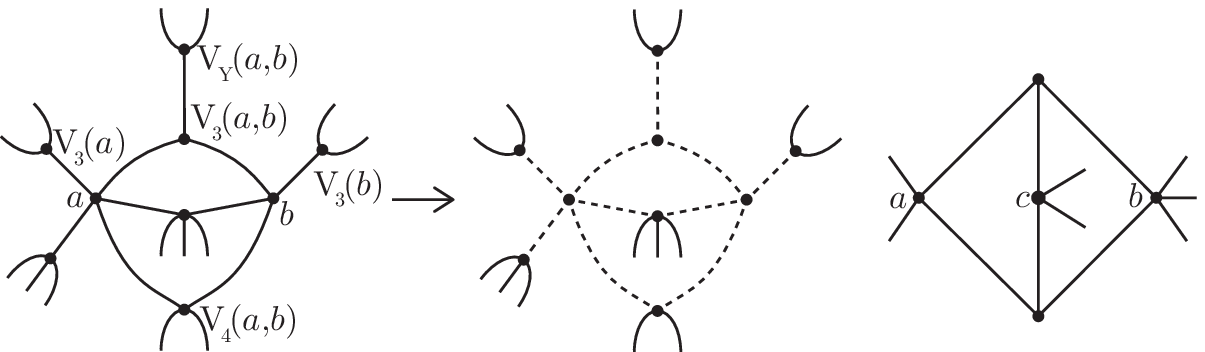}\\
\vspace{4mm}
\hspace{22mm}(a)\hspace{58mm} (b)
\caption{Derivation of $\widehat{G}_{a,b}$}
\label{fig5}
\end{figure}

A graph is called 2-{\em apex\/} if it can be made planar by deleting two vertices.
It is known that if $G$ is 2-apex, then it is not intrinsically knotted~\cite{BBFFHL, OT}.
Thus we have the following proposition.

\begin{proposition} \label{prop:planar}
If $\widehat{G}_{a,b}$ has one of the following conditions, then $G$ is not intrinsically knotted.
\begin{itemize}
\item[(1)] $|\widehat{E}_{a,b}| \leq 8$,
\item[(2)] $\widehat{G}_{a,b}$ consists of nine edges and is not homeomorphic to $K_{3,3}$, or
\item[(3)] $\widehat{G}_{a,b}$ consists of ten edges with a 2-cycle, and does not contain $K_{3,3}$.
\end{itemize}
\end{proposition}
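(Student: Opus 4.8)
The plan is to show that under any one of the three conditions the pair $\{a,b\}$ exhibits $G$ as a 2-apex graph, after which the conclusion is immediate from the cited fact that 2-apex graphs are not intrinsically knotted. So I would prove that $G\setminus\{a,b\}$ is planar, and for this it suffices to prove that $\widehat G_{a,b}$ is planar. Indeed, the operations used to pass from $G\setminus\{a,b\}$ to $\widehat G_{a,b}$ — repeatedly deleting vertices of degree at most $1$ to form $G_{a,b}$, then repeatedly contracting one edge at a degree $2$ vertex to form $\widehat G_{a,b}$ — are, respectively, deletion of a pendant and suppression of a valence $2$ vertex, hence homeomorphisms of the underlying topological graph; so they preserve planarity in both directions, even when a contraction produces a multiple edge or a loop (which is harmless for planarity in any case).

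It then remains to verify that $\widehat G_{a,b}$ is planar in each case, and here I would simply invoke Kuratowski's theorem: a graph is nonplanar exactly when it has a subgraph homeomorphic to $K_5$ or to $K_{3,3}$. The only extra ingredient is the elementary bound that a subdivision of $K_{3,3}$ has at least $9$ edges and a subdivision of $K_5$ at least $10$, with equality only for $K_{3,3}$ and $K_5$ themselves. Case (1) is then immediate, since $8$ edges are too few for either obstruction. For case (2), a nonplanar $\widehat G_{a,b}$ with only $9$ edges cannot contain a subdivision of $K_5$, so it contains a subdivision $S$ of $K_{3,3}$; but $|E(S)|\ge 9=|\widehat E_{a,b}|$ forces $S$ to exhaust the edges of $\widehat G_{a,b}$, so $\widehat G_{a,b}$ is homeomorphic to $K_{3,3}$, contrary to hypothesis. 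For case (3), a nonplanar $\widehat G_{a,b}$ containing no subdivision of $K_{3,3}$ must contain a subdivision $S$ of $K_5$; since $|E(S)|\ge 10=|\widehat E_{a,b}|$, again $S$ is all of $\widehat G_{a,b}$, whence $\widehat G_{a,b}$ is homeomorphic to $K_5$ — but $K_5$ and its subdivisions are simple, so $\widehat G_{a,b}$ has no $2$-cycle, a contradiction.

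I do not expect a genuine obstacle here; the argument is mostly bookkeeping. The two points that want a little care are: first, the assertion that the reduction $G\setminus\{a,b\}\rightsquigarrow\widehat G_{a,b}$ preserves planarity — one must note that contracting an edge at a degree $2$ vertex is precisely suppressing that vertex, a homeomorphism, and that any loops or parallel edges so created do not affect planarity; and second, the precise edge counts for subdivisions of $K_5$ and $K_{3,3}$, which are exactly what let the single number $|\widehat E_{a,b}|$ determine $\widehat G_{a,b}$ up to homeomorphism in cases (2) and (3).
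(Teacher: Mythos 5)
Your proof is correct and takes essentially the same approach as the paper, which obtains the proposition from the cited fact that 2-apex graphs are not intrinsically knotted and leaves the verification that the stated conditions force $\widehat{G}_{a,b}$ (hence $G\setminus\{a,b\}$) to be planar implicit. Your write-up merely spells out those implicit details: that deleting degree 1 vertices and suppressing degree 2 vertices preserve planarity in both directions, and the Kuratowski edge counts ($\geq 9$ for a $K_{3,3}$ subdivision, $\geq 10$ for a $K_5$ subdivision) that settle each of the three cases.
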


Since the intrinsically knotted graphs with 21 edges are known,
it is sufficient to consider simple and connected graphs having no vertex of degree 1 or 2.
Especially, we will consider graphs $G$ with 22 edges which are triangle-free 
as mentioned in the introduction.
In our process, we pick particular two vertices $a$ and $b$ of $G$
so that $\widehat{G}_{a,b}$ has least possible number of edges, usually less than or equal to 10.
Finally, Proposition~\ref{prop:planar} shows whether $G$ is intrinsically knotted or not. 

In this paper, we are interested in graphs with 22 edges having a single degree 5 vertex $a$
and the remaining degree three or four vertices. 
We distinguish into four cases for $(|V_4|, |V_3|)$; $(0,13)$, $(3,9)$, $(6,5)$ or $(9,1)$. 
We need the followings for consistent use of notations.
Henceforth, $b_i$ and $c_j$ denote the vertices of $V_4(a)$ and $V_3(a)$ respectively.
Let $\overline{V}(a)$ denote the set of vertices which have distance at least 2 from $a$
and $\overline{E}(a)$ denote the set of edges, called {\em extra edges\/}, of the induced subgraph of $G$ 
whose vertex set is $\overline{V}(a)$.
\begin{itemize}
\item $\overline{V}(a) = \{d_1, \dots, d_n\}$ \ for $n=|\overline{V}(a)|$
     with $\deg(d_i) \geq \deg(d_{i+1})$
\item $\overline{E}(a) = \{\overline{e}_1, \dots, \overline{e}_m\}$ \ for $m=|\overline{E}(a)|$
\end{itemize}
Note that we use $d'_i$ instead of $d_i$ if there is no information about their degree.

\section{$(0,13)$, $(3,9)$ and $(9,1)$ types} \label{sec:pro1}

In this section we consider the three types and 
find Cousin 29 of the $K_{3,3,1,1}$ family from the $(3,9)$ type,
which is the only intrinsically knotted graph in these types.

\subsection{$(0,13)$ type} \hspace{1cm}

In this case, $G$ consists of a single degree 5 vertex $a$ and thirteen degree 3 vertices.
Among degree 3 vertices which are not contained in $V(a)$,
one can always choose a vertex $d$ such that $|V_3(a,d)| \leq 2$.
This implies $NE(a,d) = 8$ and $NV_3(a,d) \geq 6$.
Therefore $|\widehat{E}_{a,d}| \leq 8$,
and so $G$ is not intrinsically knotted by Proposition~\ref{prop:planar}.

\subsection{$(3,9)$ type} \hspace{1cm}

We divide into four sub-cases according to the number of the vertices in $V_4(a)$.
First, assume that $V_4(a)$ is empty and choose any degree 4 vertex $d$.
Then $|V_3(a)|=5$ and $NE(a,d)=9$.
The count equation implies that $|\widehat{E}_{a,d}| \leq 8$.

Suppose that $|V_4(a)|=1$ and $b$ denotes the unique degree 4 vertex in $V_4(a)$.
If $|V_3(b)| = 3$, then $NE(a,b)=8$ and $NV_3(a,b)=7$, yielding $|\widehat{E}_{a,b}| \leq 7$.
For otherwise, $V(b)$ has a degree 4 vertex $d$.
Then $NE(a,d)=9$, $|V_3(a)|=4$ and $|V_4(a,d)|=1$, yielding $|\widehat{E}_{a,d}| \leq 8$.

Suppose that $|V_4(a)|=2$, say $b_1$ and $b_2$.
If $|V_3(b_1)|=3$ (or similarly $|V_3(b_2)|=3$), then $|\widehat{E}_{a,b_1}| \leq 8$ as above.
Thus both $V(b_1)$ and $V(b_2)$ must contain the remaining degree 4 vertex, say $d$.
Therefore $NE(a,d)=9$, $|V_3(a)|=3$ and $|V_4(a,d)|=2$, yielding $|\widehat{E}_{a,d}| \leq 8$.

For the last case, $|V_4(a)|=3$, say $b_1$, $b_2$ and $b_3$.
There is a vertex $d_1$, except $a$, adjacent to all $b_1$, $b_2$ and $b_3$.
For otherwise, there are two vertices among them, say $b_1$ and $b_2$, so that $|V_3(b_1,b_2)| \leq 1$.
Therefore $NE(b_1,b_2)=8$, and $NV_3(b_1,b_2)+|V_Y(b_1,b_2)|=6$, yielding
$|\widehat{E}_{b_1,b_2}| \leq 8$.
If there is a vertex $d_2$ in $V_3(b_1,b_2)$ other than $d_1$,
then $|\widehat{E}_{a,b_3}| \leq 9$ and 
furthermore $\widehat{G}_{a,b_3}$ contains a 3-cycle $(b_1 b_2 d_2)$
or a 2-cycle $(b_1 b_2)$ when $d_2 \adj b_3$.
Thus $\widehat{G}_{a,b_3}$ is not homeomorphic to $K_{3,3}$,
and so $G$ is not intrinsically knotted by Proposition~\ref{prop:planar}.
Therefore each of six vertices in $\overline{V}(a)$ excluding $d_1$
is adjacent to exactly one of $b_1$, $b_2$ and $b_3$ as in Figure~\ref{fig:29} (a).

Now we consider $\widehat{G}_{a,d_1}$.
Since $|\widehat{E}_{a,d_1}| \leq 9$,
it remains to consider the case that $\widehat{G}_{a,d_1}$ is homeomorphic to $K_{3,3}$.
The six edges of $\widehat{G}_{a,d_1}$ which do not touching vertices $b_1$, $b_2$ and $b_3$
are denoted by $e_1,\dots,e_6$ as in the figure.
Because of the symmetry of $K_{3,3}$, we may assume that $c_1$ is attached to the middle point of $e_1$.
If $c_2$ is not attached to a middle point of one of $\{e_3, e_4, e_5 \}$, 
then $|\widehat{E}_{b_1,b_2}| \leq 9$ and $\widehat{G}_{b_1,b_2}$ has a 3-cycle as in Figure~\ref{fig:29} (b), 
so $\widehat{G}_{b_1,b_2}$ is not homeomorphic to $K_{3,3}$.
Similarly if $c_2$ is not attached to a middle point of one of $\{ e_2,e_3,e_6 \}$ 
then $\widehat{G}_{b_2,b_3}$ has at most nine edges with a 3-cycle.
Therefore $c_2$ must be attached to the middle point of $e_3$.
This graph $G$ is indeed Cousin 29 of the $K_{3,3,1,1}$ family as drawn in Figure~\ref{fig:3eg}.

\begin{figure}[h]
\includegraphics{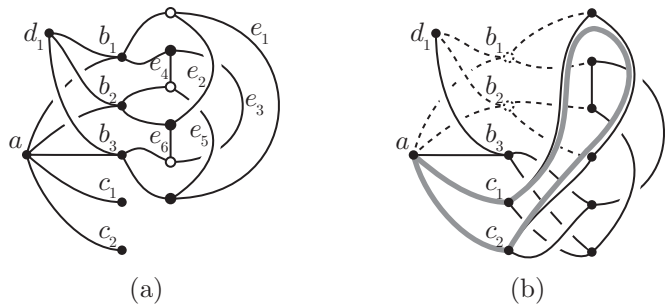}
\caption{Construction of $G$}
\label{fig:29}
\end{figure}

\subsection{$(9,1)$ type} \hspace{1cm}

Note that there are two or three extra edges.
If there is an extra edge $\overline{e}_1$ which is not connected to the other extra edges,
then we may say that $\overline{e}_1$ connects two degree 4 vertices in $\overline{V}(a)$.
Furthermore the rest six edges adjacent to these two vertices must be adjacent to the vertices in $V(a)$,
and so there is a 3-cycle containing $\overline{e}_1$.
For otherwise, all extra edges are connected
and so they miss at least one degree 4 vertex $d_1$ in $\overline{V}(a)$.
Now $|\widehat{E}_{a,d_1}| \leq 9$ since $V(a)$ contains all vertices of $V(d_1)$.
Since $\widehat{G}_{a,d_1}$ still has a degree 4 vertex in $\overline{V}(a)$,
it is not homeomorphic to $K_{3,3}$.

\section{$(6,5)$ type} \label{sec:pro2}

To handle graphs with this type, we distinguish into several cases according to the number of the set $V_4(a)$.

\subsection{$|V_4(a)| \leq 2$ cases} \hspace{1cm}

Two cases $|V_4(a)| = 0$ and $1$ can be handled as in the proof of
the two cases $|V_4(a)| = 0$ and $1$ of the (3,9)-type in Section~\ref{sec:pro1}.

Suppose that $|V_4(a)|=2$, say $b_1$ and $b_2$.
As mentioned in Section~\ref{sec:ter}, $c_1$, $c_2$ and $c_3$ denote the vertices of $V_3(a)$,
and $\overline{V}(a)$ consists of degree 4 vertices $d_1,\dots,d_4$ and 
degree 3 vertices $d_5$ and $d_6$. 
If $V(b_1) \cap V(b_2)$ contains $d_1$, $|\widehat{E}_{a,d_1}| \leq 8$,
and it contains $d_5$, $\widehat{G}_{a,d_5}$ has at most nine edges with a degree 4 vertex.
Therefore we may say that $V(b_1) \cap V(b_2)$ is empty.
If $d_5,d_6 \in V(b_1)$, $\widehat{G}_{a,b_1}$ has at most nine edges with a degree 4 vertex.
Without loss of generality, we further assume that $d_1,d_2,d_5 \in V(b_1)$ and $d_3,d_4,d_6 \in V(b_2)$.
If $d_1$ and $d_5$ are connected by an extra edge, then $|\widehat{E}_{a,d_1}| \leq 8$.
This argument implies that 
the five extra edges connect five pairs of vertices $\{d_1,d_3\},\{d_1,d_4\},\{d_2,d_3\},\{d_2,d_4\},\{d_5,d_6\}$
to avoid creating an unexpected 3-cycle.
Suppose that some $c_i$ is adjacent to two vertices of degree 3 and 4 in $\overline{V}(a)$, say $d_1$ and $d_5$.
Then $|\widehat{E}_{a,d_1}| \leq 8$ since $V_Y(a,d_1)$ is not empty.
It means that one of $c_i$'s is adjacent to both vertices $d_5$ and $d_6$ producing an unexpected 3-cycle.
Therefore $G$ is not intrinsically knotted in this case.

\subsection{$|V_4(a)| = 3$ case} \hspace{1cm}

In this case, we have exactly two triangle-free intrinsically knotted graphs 
Cousin 97 of the $E_9+e$ family (Figure~\ref{fig:3eg}) and $U_{12}$ (Figure~\ref{fig:new}).

The set $V(a)$ consists of degree 4 vertices $b_1, b_2, b_3$ and degree 3 vertices $c_1, c_2$, and
the set $\overline{V}(a)$ consists of degree 4 vertices $d_1, d_2, d_3$ and degree 3 vertices $d_4, d_5, d_6$. 

\begin{claim} \label{clm:con}
If the three degree 4 vertices of $V_4(a)$ are adjacent to 
a vertex $d$ of $\overline{V}(a)$ in common,
then $G$ is not intrinsically knotted.
\end{claim}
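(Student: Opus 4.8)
The plan is to apply Proposition~\ref{prop:planar} to the pair of vertices $a$ and the common neighbor $d$. Recall the setup just before the claim: $V(a)=\{b_1,b_2,b_3,c_1,c_2\}$ with $\deg(b_i)=4$ and $\deg(c_j)=3$, and $\overline{V}(a)=\{d_1,d_2,d_3,d_4,d_5,d_6\}$ with $\deg(d_i)=4$ for $i\le 3$ and $\deg(d_i)=3$ for $i\ge 4$, while triangle-freeness forces $b_i\nadj b_{i'}$, $c_j\nadj c_{j'}$ and $b_i\nadj c_j$. Since $d\in\overline{V}(a)$ we have $a\nadj d$, so $NE(a,d)=|E(a)|+|E(d)|=5+\deg(d)$. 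By hypothesis $b_1,b_2,b_3\in V_4(a)\cap V_4(d)=V_4(a,d)$, hence $|V_4(a,d)|\ge 3$; and since $V_3(a,d)\subseteq V_3(d)$ while $|V_3(a)|=2$, the quantity $NV_3(a,d)=|V_3(a)|+|V_3(d)|-|V_3(a,d)|$ is at least $2$. Feeding these into the count equation gives
\[
|\widehat{E}_{a,d}|\ \le\ 22-(5+\deg(d))-(2+3)\ =\ 12-\deg(d).
\]

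Next I would split on $\deg(d)$. If $\deg(d)=4$ the bound reads $|\widehat{E}_{a,d}|\le 8$, and $G$ is not intrinsically knotted by Proposition~\ref{prop:planar}(1). If $\deg(d)=3$, then the neighbors of $d$ are exactly $b_1,b_2,b_3$, all of degree $4$, so $V_3(d)=\emptyset$ and thus $V_3(a,d)=V_Y(a,d)=\emptyset$; the count equation now gives $|\widehat{E}_{a,d}|=22-8-(2+3)=9$ exactly, and it remains only to check that $\widehat{G}_{a,d}$ is not homeomorphic to $K_{3,3}$. For this I would examine $G\backslash\{a,d\}$ directly: each $b_i$ loses its edges to $a$ and to $d$ and so becomes a degree $2$ vertex, each $c_j$ loses only its edge to $a$ (as $c_j\nadj d$) and so also becomes a degree $2$ vertex, while the five vertices of $\overline{V}(a)\setminus\{d\}$ are adjacent to neither $a$ nor $d$ and therefore retain their degrees $4,4,4,3,3$. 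In particular no vertex is isolated or of degree $1$, so forming $\widehat{G}_{a,d}$ merely suppresses the five degree $2$ vertices $b_1,b_2,b_3,c_1,c_2$ and leaves a graph on exactly the five vertices $\overline{V}(a)\setminus\{d\}$, still having a vertex of degree $4$. Since $K_{3,3}$ has six vertices and $\widehat{G}_{a,d}$ has no degree $2$ vertex, $\widehat{G}_{a,d}$ cannot be homeomorphic to $K_{3,3}$, and Proposition~\ref{prop:planar}(2) finishes the argument.

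This is really just a count, so I do not expect a serious obstacle; the only point needing care is the $\deg(d)=3$ branch, where the count equation yields nine edges rather than eight and one must correctly identify $\widehat{G}_{a,d}$ as a graph on the five vertices $\overline{V}(a)\setminus\{d\}$ — in particular verifying that deleting $a$ and $d$ creates no isolated or degree $1$ vertices, so that the suppression step changes no remaining vertex's degree — in order to conclude that $\widehat{G}_{a,d}$ is not (homeomorphic to) $K_{3,3}$.
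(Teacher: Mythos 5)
Your proof is correct and takes essentially the same route as the paper's (much terser) argument: consider $\widehat{G}_{a,d}$, bound its size at nine edges via the count equation, and use the surviving degree 4 vertex to rule out a homeomorph of $K_{3,3}$, then apply Proposition~\ref{prop:planar}. The split on $\deg(d)$ and the check that deleting $a$ and $d$ creates no degree 1 vertices are just careful elaborations of what the paper leaves implicit.
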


\begin{proof}
Consider $\widehat{G}_{a,d}$.
Then $|\widehat{E}_{a,d}| \leq 9$ and there is at least one degree 4 vertex in $\widehat{G}_{a,d}$.
\end{proof}

Since $|\overline{E}(a)|=4$ and $|\overline{V}(a)|=6$, 
we distinguish the induced subgraph of $G$ whose vertices set is $\overline{V}(a)$ into six sub-cases 
according to the connection of four extra edges of $\overline{E}(a)$ as drawn in Figure~\ref{fig:6type}. 
Mention that graphs of Type I and II have vertices of degree 4 and 3, respectively,
and the other graphs have vertices of degree at most 2.

\begin{figure}[h]
\includegraphics{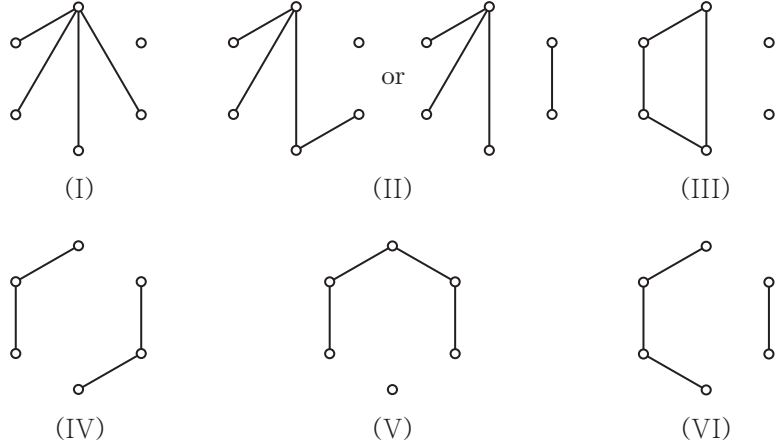}
\caption{Six types according to the connection of $\overline{E}(a)$}
\label{fig:6type}
\end{figure}

\subsubsection{Type I} \hspace{1cm}

Let $d_1$ be the degree 4 vertex where the four extra edges are adjacent.
If $|V_3(d_1)|=3$ then $|\widehat{E}_{a,d_1}| \leq 8$.
So we may assume that $V(d_1)$ consists of two degree 4 vertices $d_2, d_3$ and
two degree 3 vertices $d_4, d_5$.
By Claim~\ref{clm:con}, each of $d_2$ and $d_3$ is adjacent to at least one of $c_1$ or $c_2$.
If $V_3(d_2,d_3) = \{c_1,c_2\}$ then $\widehat{G}_{a,d_1}$ has at most nine edges with a 2-cycle $(d_2 d_3)$.
Without loss of generality, we assume that $V(d_2) = \{b_1, b_2, c_1, d_1\}$.
If $c_1$ is adjacent to a degree 3 vertex in $\overline{V}(a)$,
$|\widehat{E}_{a,d_2}| \leq 8$ because $V_Y(a,d_2)$ is not empty.
Therefore $c_1 \adj d_3$.

Since $|\widehat{E}_{a,d_1}| \leq 9$, we only need to handle the case that $\widehat{G}_{a,d_1}$ is $K_{3,3}$.
In $G_{a,d_1}$ drawn as Figure~\ref{fig:42_12} (a),
there are four dotted degree 2 vertices and six degree 3 vertices
which are colored by black and white related to the vertex partition of $K_{3,3}$.  
Furthermore four dotted edges are already assigned.
The edge of $K_{3,3}$ connecting $d_3$ and $d_6$ inevitably passes through degree 2 vertex $c_2$.
Similarly two edges of $K_{3,3}$ from $b_3$ to $b_1$ and $b_2$ pass through degree 2 vertices $d_4$ and $d_5$.
Draw the remaining edges.
Finally, we obtain the graph $G$ by restoring vertices $a$ and $d_1$ and the adjacent nine edges.
Check the graph $\widehat{G}_{a,d_3}$ which has nine edges with a 3-cycle $(b_1 b_2 d_2)$,
showing that $G$ is not intrinsically knotted.

At this point, we introduce the {\em restoring method\/} 
which is the graph-construction technique we used in the previous paragraph.
This method will be used frequently in the rest of the proof. \\

\noindent {\bf Restoring method.\/} 
The specific condition of the degree information of all vertices and the connection information of some edges is given.
We choose two vertices $a$ and $d$ in $G$ such that $\widehat{G}_{a,d}$ has at most nine edges.
Note that if $\widehat{G}_{a,d}$ is not $K_{3,3}$, it should be 2-apex.
The purpose of the restoring method is to select non 2-apex graphs among all graphs satisfying the given condition. 
By this reason, we only need to consider the case that it is $K_{3,3}$. 
$$ \widehat{G}_{a,d} \ (=K_{3,3}) \ \ \rightarrow \ \ G_{a,d} \ \ \rightarrow \ \ G $$

Now we consider the graph $G_{a,d}$ which consists of six degree 3 vertices and some degree 2 vertices.
Recall that $\widehat{G}_{a,d}$ is obtained from $G_{a,d}$ by replacing every maximal path 
whose interior has only degree 2 vertices by an edge.
Since six degree 3 vertices of $G_{a,d}$ are the vertices of $K_{3,3}$,
we may assign the usual $K_{3,3}$ bi-partition of these vertices from the given specific condition.
Assign the remaining edges of $G_{a,b}$ so as to preserve $\widehat{G}_{a,d}$ to be $K_{3,3}$.
Finally, we obtain the graph $G$ by restoring vertices $a$ and $d$,
and the removed edges.

\begin{figure}[h]
\includegraphics{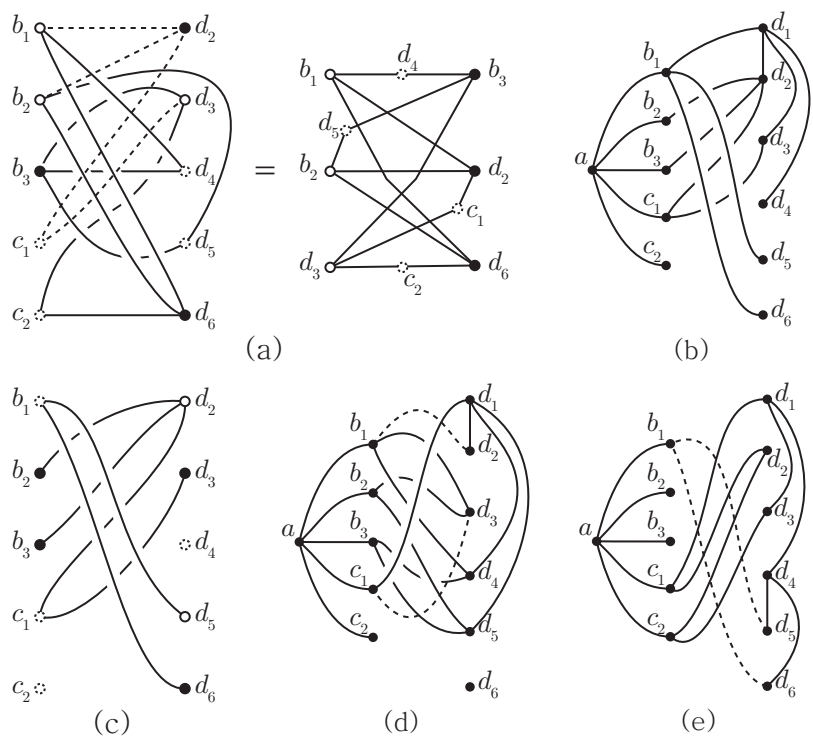}
\caption{Types I and II}
\label{fig:42_12}
\end{figure}

\subsubsection{Type II} \hspace{1cm}

Let $d'$ be the degree vertex where the three extra edges are adjacent.
First consider the case that $d'$ has degree 4 (use $d_1$ instead of $d'$ as usual) and
we may assume the fourth edge adjacent to $d_1$ is also adjacent to either $b_1$ or $c_1$.
In the former case, we may assume that the three extra edges adjacent to $d_1$
are adjacent to the rest two degree 4 vertices $d_2,d_3$ and one degree 3 vertex $d_4$.
For otherwise, $|V_3(d_1)| \geq 2$ implying $|\widehat{E}_{a,d_1}| \leq 8$.
Since $G$ is a triangle-free graph, the vertex $b_1$ is adjacent to the remaining degree 3 vertices $d_5,d_6$.
Without loss of generality, the fourth extra edge is not adjacent to $d_2$.
If $d_2$ is adjacent to two degree 3 vertices $c_1, c_2$, then $\widehat{G}_{b_1,d_2}$ has at most nine edges with a degree 4 vertex $a$.
So, it is not homeomorphic to $K_{3,3}$.
So we assume that $d_2$ is adjacent to two degree 4 vertices $b_2, b_3$ and a degree 3 vertex $c_1$.
Furthermore $c_1 \adj d_3$ as drawn in Figure~\ref{fig:42_12} (b),
for otherwise $|\widehat{E}_{a,d_2}| \leq 8$ because $V_Y(a,d_2)$ is not empty.
Then the graph drawn as Figure~\ref{fig:42_12} (c) becomes a subgraph of $\widehat{G}_{a,d_1}$.
But, we cannot assign $K_{3,3}$ bi-partition of degree 3 vertices.

In the latter case so that $d_1 \adj c_1$,
we divide into three cases according to $|V_4(d_1)|$.
If $V_4(d_1)$ is empty, $|\widehat{E}_{a,d_1}| \leq 8$.
If $V_4(d_1)$ has exactly one vertex $d_2$, 
then we may say that $d_4$ and $d_5$ are the remaining degree 3 vertices of $V(d_1)$,
and $d_3$ and $d_6$ are respectively the degree 4 and 3 vertices not in $V(d_1)$.
Now $b_1$, similarly $b_2$ and $b_3$, is adjacent to at least two vertices of $d_3$, $d_4$ and $d_5$,
for otherwise, $b_1$ must adjacent to both $d_2$ and $d_6$ 
and so $\widehat{G}_{b_1,d_1}$ has at most nine edges with a degree 4 vertex $a$.
Without loss of generality, by Claim~\ref{clm:con}, 
$b_1$, $b_2$ and $b_3$ are adjacent to $d_3$, $d_4$ and $d_5$ as in Figure~\ref{fig:42_12} (d). 
Furthermore, by considering $\widehat{G}_{a,d_1}$, $c_1 \nadj d_6$, so $c_1 \adj d_3$;
by considering $\widehat{G}_{a,b_1}$, $b_1 \nadj d_6$, so $b_1 \adj d_2$.
Since $d_6$ must be adjacent to $b_2$ or $b_3$, $\widehat{G}_{b_2,b_3}$ has at most ten edges with a 2-cycle and a 3-cycle.
Therefore, after deleting an edge of the 2-cycle, it is not homeomorphic to $K_{3,3}$.
Lastly, if $|V_4(d_1)|=2$, $V_3(d_1)$ has two vertices which are $c_1$ and $d_4$, say.
Thus $d_4$ is not adjacent to at least one vertex among $b_1$, $b_2$ and $b_3$, say $b_1$.
Then $\widehat{G}_{b_1,d_1}$ has at most nine edges with a degree 4 vertex $a$.

Now consider the case that $d'$ has degree 3 (use $d_4$ instead).
Assume that the three vertices adjacent to $d_4$ are all degree 4 vertices $d_1, d_2, d_3$.
Then there is a vertex $d_i$ among them such that $d_i$ is adjacent to at most one of $c_1$ and $c_2$,
and so $|\widehat{E}_{a,d_i}| \leq 8$ because $NV_3(a,d_1) + |V_4(a,d_1)| \geq 5$.
Assume that exactly two vertices adjacent to $d_4$ are degree 4 vertices $d_1, d_2$
with the assumption that the fourth extra edge is not adjacent to $d_1$.
To be $|\widehat{E}_{a,d_1}| \geq 9$, $d_1$ must be adjacent to both $c_1$ and $c_2$, 
and furthermore $c_1$ is adjacent to either $d_2$ or $d_3$, and $c_2$ is adjacent to the other.
Since $|V_4(a,d_3)| \geq 2$, $\widehat{G}_{a,d_3}$ has at most nine edges with a 3-cycle $(d_1 d_2 d_4)$.
It remains to consider the case that the vertices adjacent to $d_4$ are 
one degree 4 vertex $d_1$ and two degree 3 vertices $d_5, d_6$. 
If $d_2$ and $d_3$ are connected by the fourth extra edge,
the rest six edges adjacent to $d_2$ or $d_3$ are also adjacent to the five vertices adjacent to $a$
producing a 3-cycle.
Thus we may assume that no extra edge is adjacent to $d_2$.
To be $|\widehat{E}_{a,d_2}| \geq 9$, $d_2$ must be adjacent to both $c_1$ and $c_2$, 
and furthermore $c_1$ is adjacent to either $d_1$ or $d_3$, and $c_2$ is adjacent to the other.
Since $|V_4(a,d_3)| \geq 2$, $\widehat{G}_{a,d_3}$ has at most nine edges with a 3-cycle $(d_1 d_2 d_4)$.
Similarly to be $|\widehat{E}_{a,d_3}| \geq 9$, $d_3 \adj d_1$ as in Figure~\ref{fig:42_12} (e).
Now the rest four edges adjacent to $d_5$ or $d_6$ are also adjacent to the three vertices $b_1, b_2, b_3$,
so we may say that $b_1$ is adjacent to both $d_5$ and $d_6$.
Then $\widehat{G}_{a,d_1}$ has at most nine edges with a 2-cycle or a 3-cycle containing $d_5$ and $d_6$.

\subsubsection{Type III} \hspace{1cm}

Let $C$ be the cycle consisting of the four extra edges.
If there is a vertex, say $d_1$, of degree 4 in $\overline{V}(a)$ away from $C$,
vertex $d_1$ must be adjacent to both $c_1$ and $c_2$ by Claim~\ref{clm:con}, 
and furthermore $c_1$ is adjacent to either $d_2$ or $d_3$, and $c_2$ is adjacent to the other
for being $|\widehat{E}_{a,d_2}| \geq 9$.
Let $d_i$ be the remaining vertex of $\overline{V}(a)$ not in $C \cup \{d_1 \}$.
Vertex $d_i$ must be adjacent to all $b_1$, $b_2$ and $b_3$ 
because all edges adjacent to $c_1$ and $c_2$ are occupied.
This contradicts to Claim~\ref{clm:con}. 

Without loss of generality, $C= (d_1 d_2 d_3 d_4)$ where only $d_4$ has degree 3.
By Claim~\ref{clm:con}, each vertex $d_5$ and $d_6$ of degree 3 is adjacent to 
at least one of $c_1$ and $c_2$.
Furthermore $d_1$ (and similarly $d_3$) is adjacent to either $c_1$ or $c_2$,
for otherwise $|V_4(a,d_1)| = 2$ and so $|\widehat{E}_{a,d_1}| \leq 8$.

If $c_1$ adjacent to $d_1$ is adjacent to one of $d_5$ and $d_6$,
then $|\widehat{E}_{a,d_1}| \leq 8$ because of $|V_Y(a,d_1)| = 1$.
Therefore $c_1$ is adjacent to both $d_1$ and $d_3$ as in Figure~\ref{fig:42_34} (a),
and so $\widehat{G}_{a,d_2}$ has at most nine edges with a 3-cycle $(d_1 d_3 d_4)$.

\begin{figure}[h]
\includegraphics{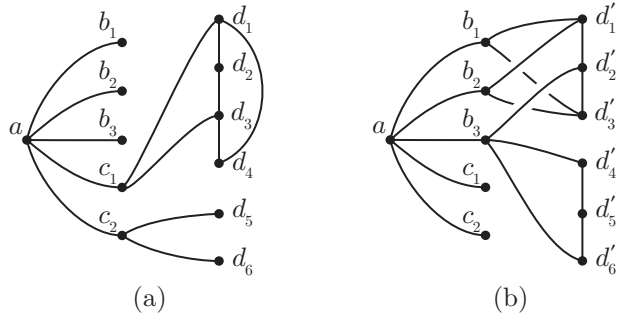}
\caption{Types III and IV}
\label{fig:42_34}
\end{figure}

\subsubsection{Type IV} \hspace{1cm}

By Claim~\ref{clm:con}, 
we may assume $b_1$, $b_2$ and $b_3$ are adjacent to $\overline{V}(a)$ as in Figure~\ref{fig:42_34} (b).
Suppose that $d'_2$ has degree 4 and is adjacent to $c_1$.
By considering $\widehat{G}_{a,d'_2}$, both $d_1$ and $d_3$ have degree 4,
for otherwise $\widehat{G}_{a,d'_2}$ has at most nine edges with a 3-cycle $(d'_4 d'_5 d'_6)$.
Now $c_1$ is adjacent to one vertex $d'_i$ among the degree 3 vertices of $\overline{V}(a)$.
Since $|V_Y(a,d'_2)| = 1$, $\widehat{G}_{a,d'_2}$ has at most nine edges with a 2-cycle.

Now consider that  $d'_2$ has degree 3.
If $d'_1$ has degree 4, then $|\widehat{E}_{a,d'_1}| \leq 8$.
Thus $d'_1$ and similarly $d'_3$ have degree 3.
Therefore $\widehat{G}_{a,d'_2}$ has at most nine edges with a 3-cycle $(d'_4 d'_5 d'_6)$.

\subsubsection{Type V} \hspace{1cm}

Let $d'_1,\dots,d'_5$ denote the five vertices appeared successively on the path 
of length 4 consisting of the edges of $\overline{E}(a)$.
Let $d'_6$ be the remaining vertex.
By Claim~\ref{clm:con}, we may assume that $b_1 \nadj d'_6$.
Then $b_1$ must be adjacent to $d'_1, d'_3, d'_5$ as in Figure~\ref{fig:42_5} (a).
If $d'_1$ has degree 4,
$d'_1$ must be adjacent to both $c_1$ and $c_2$.
For otherwise, $\widehat{G}_{a,d'_1}$ has at most nine edges with a 3-cycle $(d'_3 d'_4 d'_5)$.
Furthermore $d'_2$ has degree 4 because, if not, $|\widehat{E}_{a,d'_1}| \leq 8$.
In this case, at least one of $c_1$ and $c_2$ is adjacent to a degree 3 vertex $d'_i$
or they are adjacent to the remaining degree 4 vertex $d'_j$, $j \neq 1,2$.
In both cases, $\widehat{G}_{a,d'_1}$ has at most nine edges with a 3-cycle $(d'_3 d'_4 d'_5)$
(it may be a loop or 2-cycle).

So we may assume that $d'_1$ (similarly $d'_5$) has degree 3,
and without loss of generality $d'_2$ has degree 4.
We further assume that $d'_3$ has degree 4,
for otherwise, $\widehat{G}_{a,d'_2}$ has at most nine edges with a 3-cycle $(b_1 d'_4 d'_5)$.
If $d'_6$ has degree 4, it must be adjacent to $b_2, b_3, c_1, c_2$.
Moreover $c_1$ is adjacent to either $d'_2$ or $d'_3$, and $c_2$ is adjacent to the other,
for otherwise, $|\widehat{E}_{a,d'_6}| \leq 8$.
Now we assume that $d'_2 \adj b_2$.
Since $d'_4$ has degree 3, $\widehat{G}_{a,d'_3}$ has at most nine edges with a 3-cycle $(b_2 d'_2 d'_6)$ as in Figure~\ref{fig:42_5} (b).
Therefore $d'_6$ has degree 3 and so $d'_4$ has degree 4.

Suppose that $b_2 \nadj d'_6$.
As $b_1$, it must be adjacent to $d'_1, d'_3, d'_5$.
Then $\widehat{G}_{a,d'_3}$ has at most nine edges with a 2-cycle $(d'_1 d'_5)$.
Without loss of generality, $d'_6$ is adjacent to $b_2, b_3, c_1$.
If $c_2 \nadj d'_2$, $|\widehat{E}_{a,d'_2}| \leq 8$ since $|V_4(a,d'_2)| + |V_Y(a,d'_2)| = 2$.
Therefore $c_2 \adj d'_2$, and similarly $c_2 \adj d'_4$.
If $d'_3 \adj b_2$ (or similarly $d'_3 \adj b_3$), $b_2$ must be adjacent to either $d'_1$ or $d'_5$,
say $b_2 \adj d'_1$.
Then $\widehat{G}_{a,d'_2}$ has at most nine edges with a 3-cycle $(b_1 b_2 d'_3)$.
Therefore $d'_3 \adj c_1$ as in Figure~\ref{fig:42_5} (c).

Finally, if $d'_1$ and $d'_5$ are adjacent to the same vertex $b_2$ (or $b_3$),
then $\widehat{G}_{a,d'_3}$ has at most nine edges with a 3-cycle $(b_2 d'_1 d'_5)$.
Without loss of generality, $b_2$ is adjacent to both $d'_1$ and $d'_4$,
and $b_3$ is adjacent to both $d'_2$ and $d'_5$.
Then $G$ is homeomorphic to Cousin 97 of the $E_9+e$ family.

\begin{figure}[h!]
\includegraphics{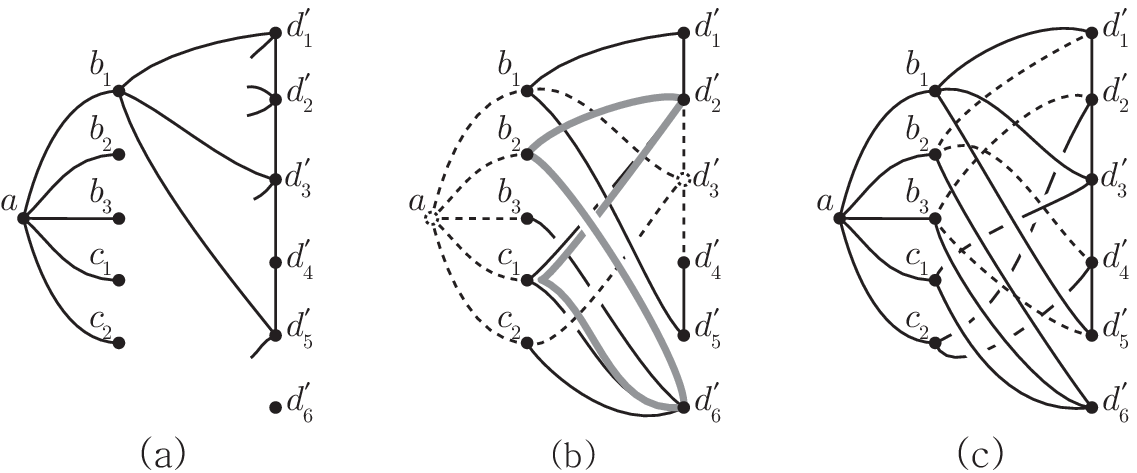}
\caption{Type V}
\label{fig:42_5}
\end{figure}

\subsubsection{Type VI} \hspace{1cm}

Let $d'_1,\dots,d'_4$ denote the four vertices appeared successively on the path 
of length 3 consisting of the edges of $\overline{E}(a)$,
and the remaining vertices $d'_5,d'_6$ are connected by an edge of $\overline{E}(a)$.
Each degree 4 vertex $b_i$ is adjacent to two vertices among $d'_1,\dots,d'_4$ and one of $d'_5,d'_6$.
Consider the number $|V_4(a,d'_2)|+|V_4(a,d'_3)|$ which must be 3 or 2 by Claim~\ref{clm:con}.
Without loss of generality, either case is drawn as Figure~\ref{fig:42_6} (a) or (b), respectively.

\begin{figure}[h!]
\includegraphics{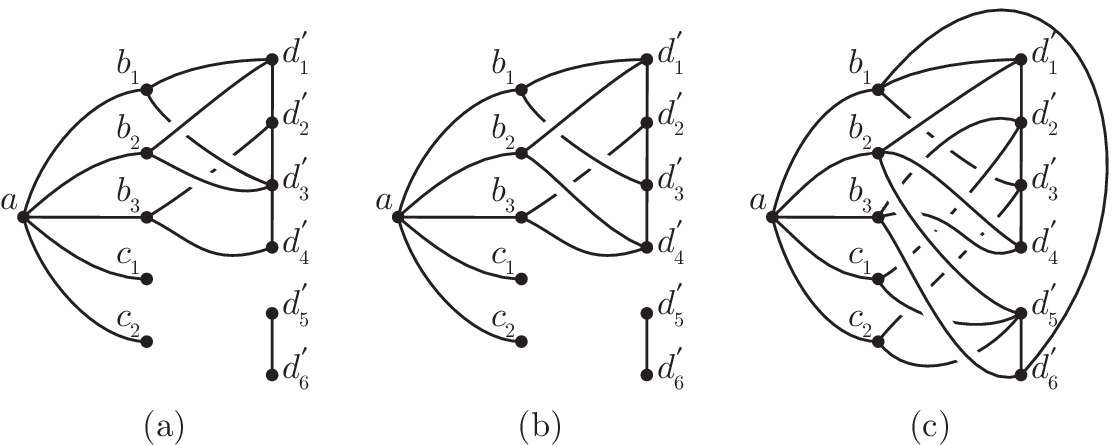}
\caption{Type VI}
\label{fig:42_6}
\end{figure}

For the first case (a), 
$d'_2$ and $d'_4$ have degree 4, for otherwise $|\widehat{E}_{a,d'_3}| \leq 8$.
Furthemore $d'_4$ is adjacent to both $c_1$ and $c_2$ and we may assume that $d'_2 \adj c_1$.
Therefore $\widehat{G}_{a,d'_3}$ has at most nine edges with a 3-cycle $(b_3 d'_2 d'_4)$.

For the second case (b),
If $d'_2$ has degree 3, then $\widehat{G}_{a,d'_1}$ has at most nine edges with a 3-cycle $(b_3 d'_3 d'_4)$.
So $d'_2$ and similarly $d'_3$ have degree 4.
Assume that $d'_2 \adj c_1$ and $d'_3 \adj c_2$.
If $d'_1$ has degree 4, then $d'_1 \adj c_2$.
Since $V_Y(a,d'_2)$ is nonempty, $\widehat{G}_{a,d'_2}$ has at most nine edges with a 3-cycle $(b_1 d'_1 d'_3)$.
So $d'_1$ and similarly $d'_4$ have degree 3.
Assume that $d'_5$ and $d'_6$ have degree 4 and 3, respectively.
Now $d'_5$ is adjacent to both $c_1$ and $c_2$, for otherwise $|\widehat{E}_{a,d'_5}| \leq 8$.
If $d'_5 \adj b_1$, then $\widehat{G}_{a,d'_5}$ has at most nine edges with a 3-cycle $(d'_1 d'_2 d'_3)$.
Finally $d'_6 \adj b_1$ and similarly $d'_6 \adj b_3$, and so $d'_5 \adj b_2$ as drawn in Figure~\ref{fig:42_6} (c).
This graph is a previously unknown intrinsically knotted graph $U_{12}$ in Figure~\ref{fig:new}.

\subsection{$|V_4(a)| = 4$ case} \label{subsec:4} \hspace{1cm}

In this case, we have exactly two triangle-free intrinsically knotted graphs 
Cousin 99 of the $E_9+e$ family (Figure~\ref{fig:3eg}) and $U_{12}'$ (Figure~\ref{fig:new}).

The set $V(a)$ consists of degree 4 vertices $b_1, b_2, b_3, b_4$ and degree 3 vertex $c_1$, and
the set $\overline{V}(a)$ consists of degree 4 vertices $d_1, d_2$ and degree 3 vertices $d_3, d_4, d_5, d_6$. 
We first introduce three useful claims.

\begin{claim} \label{clm:cl2}
$d_1 \nadj d_2$.
\end{claim}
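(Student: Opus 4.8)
The plan is to suppose, for contradiction, that $d_1 \adj d_2$ and derive a planar (indeed $2$-apex) conclusion via the count equation, ruling out intrinsic knottedness. In this $|V_4(a)|=4$ subcase we have $|\overline{E}(a)|=3$ extra edges among the six vertices $\overline{V}(a)=\{d_1,d_2,d_3,d_4,d_5,d_6\}$, where $d_1,d_2$ have degree $4$ and $d_3,\dots,d_6$ have degree $3$. If one of the three extra edges is the edge $d_1 d_2$, then the two degree-$4$ vertices $d_1,d_2$ together absorb this edge and have their remaining $3+3=6$ edge-ends going to $V(a)$; since $V(a)$ consists of $b_1,b_2,b_3,b_4$ and the single degree-$3$ vertex $c_1$, I would first note (as in several earlier Type arguments) that this forces a short cycle through $d_1 d_2$ unless the incidences are spread out, and more importantly it severely limits $NE(a,d_1)$.

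The key computation I would carry out is to evaluate $\widehat{G}_{a,d_1}$ (or $\widehat{G}_{a,d_2}$) using the count equation
$$|\widehat{E}_{a,d_1}| = 22 - NE(a,d_1) - \bigl(NV_3(a,d_1) + |V_4(a,d_1)| + |V_Y(a,d_1)|\bigr).$$
Since $a$ has degree $5$ and $d_1$ has degree $4$ and they share the edge $a d_1$ is \emph{not} present (as $d_1 \in \overline{V}(a)$ has distance $\ge 2$ from $a$), we get $NE(a,d_1) = 5 + 4 = 9$. Because $d_1 \adj d_2$ and $d_2$ also has degree $4$, the edge $d_1 d_2$ contributes to $|V_4(a,d_1)|$... no — $d_2 \notin V(a)$, so instead I track that $d_1$'s four neighbours are $d_2$ and three vertices of $V(a)$; among those three at most one can be $c_1$ (degree $3$) so at least two are among $\{b_1,\dots,b_4\}$, each of degree $4$, giving $|V_4(a,d_1)| \ge 2$, and moreover $d_2 \in V(d_1)$ forces extra contractions. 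Carefully accounting for $NV_3(a,d_1)$, $|V_4(a,d_1)|$, and $|V_Y(a,d_1)|$ (the last being nonempty precisely because $d_1$'s neighbour $d_2$ is a degree-$4$ vertex outside $V(a)$ whose other three neighbours spill into $V(a)$, or because $c_1$ sits on a $Y$), I expect to reach $|\widehat{E}_{a,d_1}| \le 8$, hence $G$ is $2$-apex by Proposition~\ref{prop:planar}(1), contradicting intrinsic knottedness.

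The main obstacle is handling the case distinction on \emph{where} the edge $d_1 d_2$ sits relative to the other two extra edges and the neighbours of $c_1$: if the count equation alone gives only $|\widehat{E}_{a,d_1}| \le 9$ rather than $\le 8$, I would fall back on the refinements of Proposition~\ref{prop:planar}, namely showing $\widehat{G}_{a,d_1}$ contains a $2$-cycle (coming from two parallel paths between $b_i$ and $b_j$ created when both $b_i,b_j$ are adjacent to $d_1$ and to a common further vertex) or a $3$-cycle, so that it is not $K_{3,3}$. This mirrors the Type~I--VI arguments of the preceding subsection, so I would reuse the same bookkeeping: pick the pair $\{a,d_1\}$ when it works, otherwise $\{a,d_2\}$ by symmetry, and in the rare residual configuration switch to a pair like $\{b_i, d_1\}$ which carries a visible degree-$4$ vertex ($a$) in its reduced graph, again defeating homeomorphism with $K_{3,3}$. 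The cleanest route, which I would try first, is simply to observe that $d_1 \adj d_2$ together with $\deg d_1 = \deg d_2 = 4$ and $d_1,d_2 \in \overline{V}(a)$ already forces $NE(a,d_1)=9$ and $NV_3(a,d_1)+|V_4(a,d_1)|+|V_Y(a,d_1)| \ge 2$ outright, giving $|\widehat{E}_{a,d_1}| \le 9$ with an unavoidable $2$-cycle through the two common $b$-neighbours of $d_1$, completing the contradiction.
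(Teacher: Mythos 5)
Your setup is on the right track (and matches the paper's starting point): assume $d_1 \adj d_2$, note that triangle-freeness lets you assume $c_1 \nadj d_1$, and split according to how many of $d_1$'s remaining three neighbours lie in $\{b_1,\dots,b_4\}$. But your quantitative hopes do not hold up. Since $NE(a,d_1)=9$, $|V_3(a)|=1$ and the three neighbours of $d_1$ other than $d_2$ each contribute exactly one to $NV_3(a,d_1)+|V_4(a,d_1)|$, the count equation gives $|\widehat{E}_{a,d_1}| \le 9$ in every case, never $\le 8$ (your ``$\ge 2$'' lower bound on the correction term would only give $\le 11$ anyway; the true value of the correction is $4$ plus $|V_Y(a,d_1)|$). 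So Proposition~\ref{prop:planar}(1) is out of reach and everything hinges on your fallback: showing $\widehat{G}_{a,d_1}$ is not homeomorphic to $K_{3,3}$. That is where the genuine gap lies. Your claimed ``unavoidable $2$-cycle through the two common $b$-neighbours of $d_1$'' is false: a $2$-cycle only arises when two contracted $b_i$'s join the \emph{same} pair of degree-$3$ vertices (i.e.\ $|V_3(b_i,b_j)|=2$), and there are consistent configurations with $|V_3(b_i,b_j)|=1$ for all pairs in which $\widehat{G}_{a,d_1}$ really is $K_{3,3}$, with no $2$- or $3$-cycle to exploit. This is exactly the situation the paper's proof must confront (its Figure configurations in the first and second cases, and the whole third case).

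What the paper does there, and what your proposal only gestures at, is the restoring method: assuming $\widehat{G}_{a,d_1}=K_{3,3}$, it reconstructs the full graph $G_{a,d_1}$ (pinning down the bipartition forced by the already-known edges), restores $a$, $d_1$ and their nine edges to recover $G$ completely, and only then reduces with respect to a \emph{different} pair of vertices --- $\{b_3,d_2\}$, $\{b_2,d_2\}$ and $\{b_2,b_4\}$ in the three cases --- obtaining at most nine edges together with a degree-$4$ vertex or too few edges, so that this second reduction cannot be $K_{3,3}$ either. Your suggestion to ``switch to a pair like $\{b_i,d_1\}$'' is not backed by any edge count, and without first reconstructing $G$ you cannot even evaluate $|\widehat{E}_{b_i,d_1}|$ or exhibit the obstruction; note also that the pairs that actually work in the paper avoid $d_1$ altogether. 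So the proposal as written would stall precisely in the residual $K_{3,3}$ configurations, which are the heart of the claim.
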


\begin{proof}
Suppose instead $d_1 \adj d_2$.
Without loss of generality, we assume $c_1 \nadj d_1$ since $G$ is a triangle-free graph.
We divide into three cases according to $|V_4(a,d_1)|$ as in Figure~\ref{fig:cl2}.

\begin{figure}[h]
\includegraphics{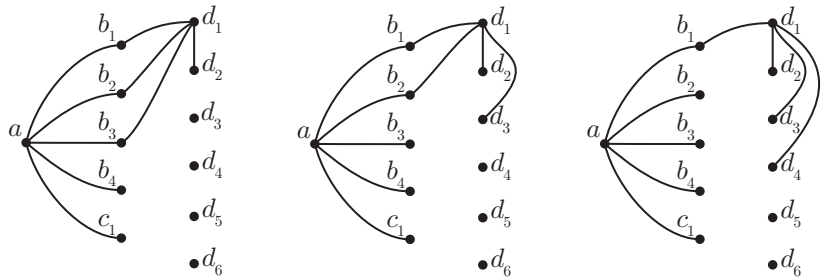}
\caption{Three cases for Claim~\ref{clm:cl2}}
\label{fig:cl2}
\end{figure}

First, assume that $d_1$ is adjacent to $b_1,b_2,b_3$.
Consider $V_3(b_1,b_2)$.
If $V_3(b_1,b_2)$ has two vertices, say $d_3, d_4$,
then $\widehat{G}_{a,d_1}$ has at most nine edges with a 2-cycle $(d_3 d_4)$.
If $V_3(b_1,b_2)$ is empty,
then $\widehat{G}_{b_1,b_2}$ has at most nine edges with a degree 4 vertex $d_2$.
Therefore $|V_3(b_1,b_2)|=1$ and similarly $|V_3(b_1,b_3)|=|V_3(b_2,b_3)|=1$.
Therefore we have two configurations drawn as Figure~\ref{fig:cl21}.

\begin{figure}[h]
\includegraphics{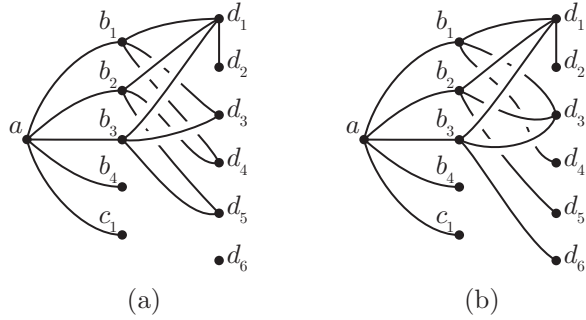}
\caption{Two configurations in the first case}
\label{fig:cl21}
\end{figure}

For Figure~\ref{fig:cl21} (a), $\widehat{G}_{a,d_1}$ has at most nine edges with a 3-cycle $(d_3 d_4 d_5)$.
For Figure~\ref{fig:cl21} (b), 
$|\widehat{E}_{a,d_1}| \leq 9$ and so we only need to handle that $\widehat{G}_{a,d_1}$ is $K_{3,3}$.
In the process of the restoring method,
we construct $G_{a,d_1}$ which is drawn as Figure~\ref{fig:claim2} (a).
In details, there are four dotted degree 2 vertices and six degree 3 vertices
which are colored by black and white related to the vertex partition of $K_{3,3}$.
Note that three vertices $d_4, d_5, d_6$ are in the same partition because of the three dotted edges.
Then there is a unique way to connect $b_4$ and the three black vertices,
and also $d_2$ is connected to these black vertices by the remaining two extra edges
and the path via $c_1$.
Now we obtain the graph $G$ by restoring vertices $a$ and $d_1$ and the adjacent nine edges.
Then $\widehat{G}_{b_3,d_2}$ has eight edges.

\begin{figure}[h]
\includegraphics{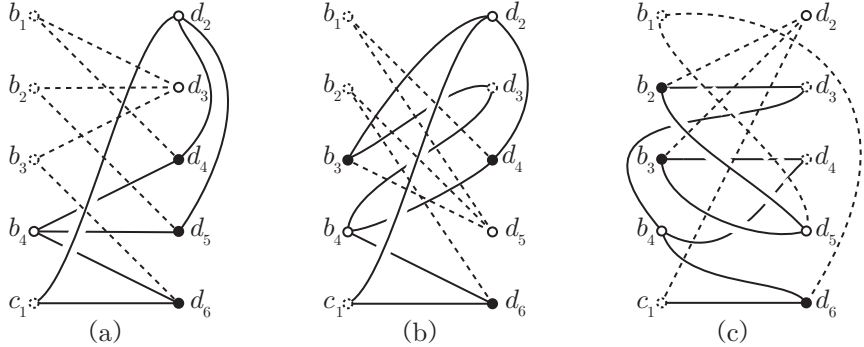}
\caption{Restoring methods for Claim~\ref{clm:cl2}}
\label{fig:claim2}
\end{figure}

Second, assume that $d_1$ is adjacent to $b_1,b_2,d_3$.
If $V_3(b_1,b_2)$ has two vertices, say $d_3, d_4$,
then $\widehat{G}_{a,d_1}$ has at most nine edges with a 2-cycle $(d_3 d_4)$.
We assume without loss of generality that $V_3(b_1)=\{d_4,d_5\}$ and $V_3(b_2)=\{d_5,d_6\}$.
The remaining edge adjacent to $d_5$ must be adjacent to a degree 4 vertex.
For otherwise, $\widehat{G}_{b_1,b_2}$ has at most nine edges with a degree 4 vertex $d_2$.
If $d_5 \adj d_2$, then $|V_4(a,d_2)|+|V_Y(a,d_2)|=2$, 
and so $\widehat{G}_{a,d_2}$ has at most nine edges with a 3-cycle $(b_1 b_2 d_1)$.
Therefore $d_5 \adj b_3$ (or similarly for $d_5 \adj b_4$).
Following the restoring method,
we construct $G_{a,d_1}$ as drawn in Figure~\ref{fig:claim2} (b).
In this case, $b_3, d_4, d_6$ are in the same partition
and the rest of the construction is similar to the previous case.
From $G$ obtained by restoring the vertices and edges,
we have $\widehat{G}_{b_2,d_2}$ with nine edges and a degree 4 vertex~$a$.

Third, assume that $d_1$ is adjacent to $b_1,d_3, d_4$,
and so $V_3(b_1)=\{d_5,d_6\}$.
If $d_2$ is adjacent to $b_2, b_3, b_4$, we already handle in the first case of this proof.
Therefore we may assume that $d_2$ is adjacent to $b_2, b_3, c_1$.
Following the restoring method,
we construct $G_{a,d_1}$ as drawn in Figure~\ref{fig:claim2} (c).
In this case, $b_2, b_3$ are in the same partition and $d_5, d_6$ are in different partitions.
We assume that $b_2, b_3, d_6$ are in the same partition,
and similarly construct the rest.
From the restored graph $G$,
we get $\widehat{G}_{b_2,b_4}$ with nine edges and a degree 4 vertex $d_2$.
\end{proof}

\begin{claim} \label{clm:cl3}
At least one extra edge is adjacent to $d_i$, $i=1,2$.
\end{claim}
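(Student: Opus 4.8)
The plan is to argue by contraposition: assuming a graph $G$ of this type has one of $d_1,d_2$ incident to no extra edge, I will show $G$ is not intrinsically knotted. By the symmetry of the two degree $4$ vertices of $\overline{V}(a)$ we may assume $d_1$ is the offending vertex, so all four neighbours of $d_1$ lie in $V(a)$. Since no two neighbours of $a$ are adjacent ($G$ is triangle-free), $V(d_1)$ is either $\{b_1,b_2,b_3,b_4\}$ or, up to relabelling, $\{c_1,b_1,b_2,b_3\}$. In the first case $|V_4(a,d_1)|=4$, so the count equation gives $|\widehat{E}_{a,d_1}|\le 22-9-(1+4)=8$ and Proposition~\ref{prop:planar}(1) applies. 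In the second case $NE(a,d_1)=9$ and the only remaining freedom is the third neighbour $x$ of $c_1$: if $\deg(x)=3$ then $V_Y(a,d_1)\ne\emptyset$, so $|\widehat{E}_{a,d_1}|\le 8$; if $\deg(x)=4$ then $x$ cannot be any $b_i$, hence $x=d_2$, i.e. $c_1\adj d_2$. Thus everything reduces to the single configuration $V(d_1)=\{c_1,b_1,b_2,b_3\}$ with $c_1\adj d_2$, where $|\widehat{E}_{a,d_1}|=9$.

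For this configuration I would apply the restoring method. Deleting $a$ and $d_1$ and then removing the degree $1$ vertex $c_1$ leaves $G_{a,d_1}$ with $12$ edges, three degree $2$ vertices $b_1,b_2,b_3$, and six degree $3$ vertices $b_4,d_2,d_3,d_4,d_5,d_6$; by Proposition~\ref{prop:planar} it suffices to treat the case $\widehat{G}_{a,d_1}=K_{3,3}$. The $K_{3,3}$ bipartition is then forced: one part is $b_4$ together with its two non-neighbours among $\{d_2,\dots,d_6\}$, and the other part is the set of the three $d$'s adjacent to $b_4$. The three edges of $K_{3,3}$ at $b_4$ are honest edges of $G$, while the remaining six edges form a $K_{2,3}$ between the two $d$'s on $b_4$'s side and the three $d$'s opposite; these six split into the three extra edges (direct $d$--$d$ edges of $G$) and the three paths obtained by suppressing $b_1,b_2,b_3$.

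Finally, I would pigeonhole on those three suppressed paths: one of the two $d$'s on $b_4$'s side, say $z$, is an endpoint of at least two of them, so two of $b_1,b_2,b_3$---say $b_i$ and $b_j$---are adjacent to $z$ in $G$, and I would examine $\widehat{G}_{b_i,b_j}$. Deleting $b_i,b_j$ cuts both paths at $z$, dropping $z$ to degree $\le 1$, while the two opposite-side vertices at the far ends of those two paths each lose one edge; in the principal sub-case the accounting of the remaining extra edges forces each of those two vertices to be left adjacent only to $b_4$ and to the \emph{other} $d$ on $b_4$'s side, so they contract to the same edge and $\widehat{G}_{b_i,b_j}$ acquires a $2$-cycle. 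Then $\widehat{G}_{b_i,b_j}$ has at most ten edges, contains a $2$-cycle, and (one checks) no $K_{3,3}$, so $G$ is not intrinsically knotted by Proposition~\ref{prop:planar}(2) and (3). The main obstacle is exactly this last step: the restoring method splits into several sub-configurations according to which of the six $K_{2,3}$-edges are extra edges, where $d_2$ sits in the bipartition, and where the third suppressed path lands; in the few cases where $(b_i,b_j)$ does not immediately produce a $2$-cycle one must instead exhibit a $3$-cycle in $\widehat{G}_{b_i,b_j}$ or pass to another pair of vertices, and one must also confirm the absence of $K_{3,3}$ in the ten-edge reductions---carrying out this case analysis cleanly is the delicate part.
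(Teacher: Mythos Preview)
Your reduction to the single configuration $V(d_1)=\{c_1,b_1,b_2,b_3\}$ with $c_1\adj d_2$ and $|\widehat{E}_{a,d_1}|=9$ is exactly the paper's argument, and your description of the $K_{3,3}$ bipartition of $G_{a,d_1}$ (three edges at $b_4$ direct, the remaining $K_{2,3}$ split into the three extra edges and the three $b_1,b_2,b_3$-paths) is correct and matches the restoring-method setup the paper uses.

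Where you diverge is the organisation of the final case analysis. The paper does \emph{not} pigeonhole on a pair $(b_i,b_j)$. Instead it relabels $\overline{V}(a)\setminus\{d_1\}=\{d'_1,\dots,d'_5\}$, observes that the three extra edges among these five vertices fall into one of three shapes (a $3$-path, a $2$-path plus an isolated edge, three disjoint edges), reconstructs $G_{a,d_1}$ uniquely for each shape, and then---since at this stage $d_2$ has not been distinguished---runs through every possible position of $d_2$ among the $d'_i$. For each of these sub-cases it exhibits a specific pair $(x,y)$ with an explicit obstruction in $\widehat{G}_{x,y}$; the pairs used are not uniformly of the form $(b_i,b_j)$ but include $(a,d'_i)$, $(b_i,d'_j)$ and $(b_i,b_j)$, and the obstructions range over ``$\le 9$ edges with a degree $4$ vertex'', ``$\le 9$ edges with a $2$- or $3$-cycle'', and ``$\le 10$ edges with two $2$-cycles''.

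Your pigeonhole idea is plausible, but you explicitly leave it incomplete, and the paper's evidence suggests a uniform $(b_i,b_j)$ argument will not close every branch: several of the paper's sub-cases are dispatched only by pairs involving $a$ or a $d'_i$. So the gap in your proposal is not a wrong idea but an unfinished case analysis; to complete it you would either have to carry your pigeonhole scheme through all the sub-configurations you mention (tracking where $d_2$ sits and verifying the absence of $K_{3,3}$ in each ten-edge reduction), or switch to the paper's extra-edge-shape classification, which is more systematic and terminates cleanly.
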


\begin{proof}
Suppose instead that $|V(a) \cap V(d_1)|=4$.
If $|V_4(a,d_1)|=4$ then $|\widehat{E}_{a,d_1}| \leq 8$.
So we assume that $d_1$ is adjacent to $b_1,b_2,b_3,c_1$.
If $V_Y(a,d_1) $ is not empty, then $|\widehat{E}_{a,d_1}| \leq 8$.
Thus $c_1 \adj d_2$ by an edge $e$.
Let $d'_1, \dots, d'_5$, denote five vertices of $\overline{V}(a)$ excluding $d_1$,
and the three extra edges are arranged by three ways (I), (II) and (III) as described in Figure~\ref{fig:cl3}.
Note that we do not distinguish $d_2$ from the other four degree 3 vertices.

\begin{figure}[h]
\includegraphics{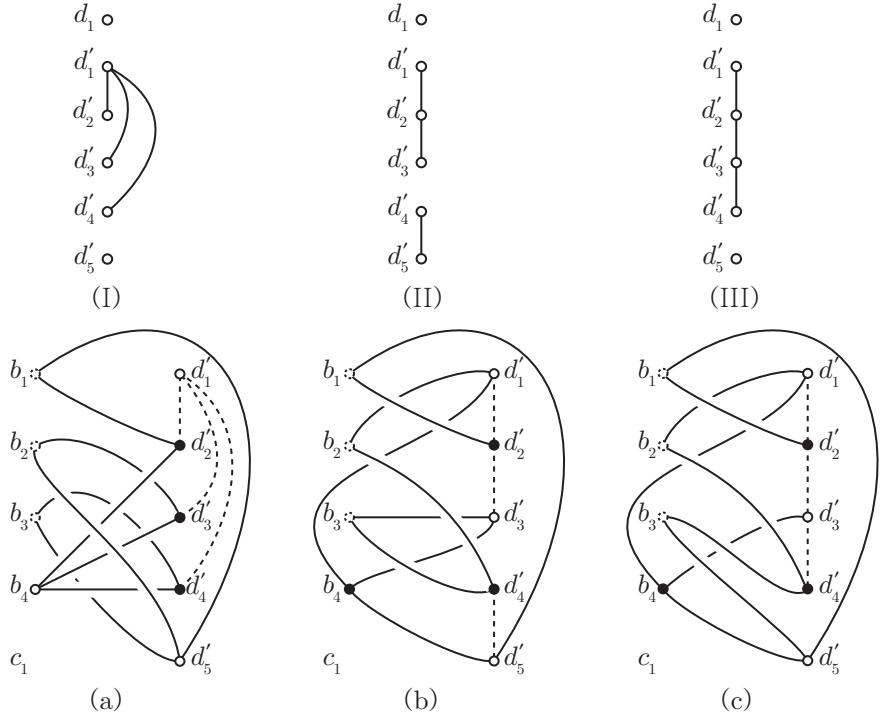}
\caption{Three cases according to the way $\overline{E}(H)$ is connected in Claim~\ref{clm:cl3}}
\label{fig:cl3}
\end{figure}

Following the restoring method,
in three cases (I), (II) and (III), 
we easily obtain $G_{a,d_1}$ as drawn in Figure~\ref{fig:cl3} (a), (b) and (c), respectively.
Now we obtain the graph $G$ by restoring the nine edges adjacent to $a$ and $d_1$
and one more edge $e$ connecting $c_1$ and one among $d'_1, \dots, d'_5$.
In the figure (a), 
if $c_1 \adj d'_1$, then $\widehat{G}_{b_1,d'_1}$ has at most nine edges with degree 4 vertex $a$.
If $c_1 \adj d'_2$, then $\widehat{G}_{a,d'_2}$ has at most nine edges with a 2-cycle $(d'_3 d'_4)$.
If $c_1 \adj d'_5$, then $\widehat{G}_{b_1,b_2}$ has at most ten edges 
with two distinct 2-cycles $(b_3 c_1)$ and $(b_4 d'_1)$.
In the figure (b),
if $c_1 \adj d'_1$ (and similarly $c_1 \adj d'_3$), then $\widehat{G}_{a,d'_1}$ has at most nine edges 
with a 3-cycle $(b_3 d_1 d'_4)$.
If $c_1 \adj d'_2$, then $\widehat{G}_{b_2,b_3}$ has at most nine edges with a 3-cycle $(a b_1 c_1)$.
If $c_1 \adj d'_4$, then $\widehat{G}_{a,d'_1}$ has at most ten edges 
with a 2-cycles $(d_1 d'_4)$ and a 3-cycle $(b_3 d_1 d'_4)$.
If $c_1 \adj d'_5$, then $\widehat{G}_{a,d'_5}$ has at most nine edges with a 3-cycle $(b_2 b_3 d_1)$.
In the figure (c),
if $c_1 \adj d'_i$, $i=1, \dots, 5$, 
then the reader easily check that $\widehat{G}_{a,d'_i}$ has at most nine edges with a 3-cycle in each case.
\end{proof}

\begin{claim} \label{clm:cc}
$c_1$ is adjacent to both $d_1$ and $d_2$.
\end{claim}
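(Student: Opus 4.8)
The plan is to argue by contradiction, assuming $c_1$ is adjacent to at most one of $d_1,d_2$, and then combine this with Claims~\ref{clm:cl2} and~\ref{clm:cl3} to pin down the structure so tightly that every remaining configuration is forced to be 2-apex. By Claim~\ref{clm:cl2} we know $d_1 \nadj d_2$, so all four extra edges of $\overline{E}(a)$ run between $\{d_1,d_2\}$ and the degree 3 vertices $d_3,\dots,d_6$, or among the $d_3,\dots,d_6$ themselves; and by Claim~\ref{clm:cl3} each of $d_1,d_2$ receives at least one extra edge. Since $\deg(d_1)=\deg(d_2)=4$, the number of edges from $d_i$ to $V(a)$ is $4$ minus the number of extra edges at $d_i$, so each $d_i$ is adjacent to between one and three vertices of $V(a)=\{b_1,b_2,b_3,b_4,c_1\}$. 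The working assumption $c_1 \nadj d_1$ (the remaining case $c_1 \nadj d_2$ is symmetric, and if $c_1$ is adjacent to neither it is a special subcase) means every $V(a)$-neighbor of $d_1$ is among $b_1,\dots,b_4$.

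First I would split on $|V_4(a,d_1)|$, i.e. the number of extra edges at $d_1$, which is $4-|V_4(a,d_1)|$; so $|V_4(a,d_1)| \in \{1,2,3\}$. When $|V_4(a,d_1)|=3$, say $d_1 \adj b_1,b_2,b_3$, the one extra edge at $d_1$ goes to some $d_j$; the key computation is that $\widehat{G}_{a,d_1}$ has at most nine edges and I would show it carries a short cycle (coming from the way the remaining three extra edges distribute among $d_3,\dots,d_6$, as in the Type analysis of Section~4.2) so it is not $K_{3,3}$ — the one genuinely delicate configuration being handled by the restoring method exactly as in the proof of Claim~\ref{clm:cl2}. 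When $|V_4(a,d_1)|=1$, $d_1$ has three extra edges, forcing three of $d_3,d_4,d_5,d_6$ to be its neighbors and leaving very little freedom; here $NV_3(a,d_1)$ is large and I expect $|\widehat{E}_{a,d_1}| \leq 8$ outright, or a forced 2-cycle. The case $|V_4(a,d_1)|=2$ is the intermediate one: $d_1 \adj b_i,b_j$ and two extra edges at $d_1$; I would then bring in the symmetric information about $d_2$ (using Claim~\ref{clm:cl3}: $d_2$ also has an extra edge) and count $|\widehat{E}_{a,d_2}|$, using that $V_Y(a,d_2)$ or $V_4(a,d_2)$ is nonempty to force either a planar quotient or a $K_{3,3}$-excluding cycle.

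The main obstacle, as in Claims~\ref{clm:cl2} and~\ref{clm:cl3}, will be the handful of configurations where the naive edge count gives exactly $|\widehat{E}_{a,d_1}| = 9$ \emph{and} no obvious multiple edge or triangle appears, so that I cannot immediately rule out $\widehat{G}_{a,d_1} \cong K_{3,3}$. For those I plan to run the restoring method: assume $\widehat{G}_{a,d_1} = K_{3,3}$, reconstruct $G_{a,d_1}$ by inserting the degree 2 vertices along the maximal degree-2 paths, use the known extra edges to force the $K_{3,3}$ bipartition of the six degree 3 vertices, complete the remaining edges in the unique way compatible with $K_{3,3}$, restore $a$ and $d_1$ with their nine edges (and the edge through $c_1$ if relevant), and then exhibit a \emph{different} pair of vertices — typically some $\widehat{G}_{b_i,d_2}$ or $\widehat{G}_{b_i,b_j}$ — whose reduced graph has at most nine edges with a vertex of degree 4 or a $2$- or $3$-cycle, hence is 2-apex by Proposition~\ref{prop:planar}. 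This contradicts intrinsic knottedness in every branch, so the assumption $c_1 \nadj d_1$ (and symmetrically $c_1 \nadj d_2$) is untenable, and $c_1$ must be adjacent to both $d_1$ and $d_2$.
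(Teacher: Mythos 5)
Your proposal never actually closes the claim: each branch ends in ``I would show'' or ``I expect'', and the deferred verifications are precisely the content of the statement. More importantly, the whole apparatus you set up (splitting on $|V_4(a,d_1)|$, hunting for short cycles, invoking the restoring method for supposedly delicate $K_{3,3}$ configurations) is unnecessary, because the count equation settles the claim in one step. Suppose $c_1 \nadj d_1$. By Claim~\ref{clm:cl2} we also have $d_1 \nadj d_2$, so every one of the four neighbours of $d_1$ lies in $V_4(a) \cup \{d_3,d_4,d_5,d_6\}$, giving $|V_4(a,d_1)| + |V_3(d_1)| = 4$. Since $V_3(a)=\{c_1\}$ and $V_3(a,d_1)=\emptyset$, we get $NV_3(a,d_1) = 1 + |V_3(d_1)|$, and $NE(a,d_1) = 5+4 = 9$ because $a \nadj d_1$. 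Hence the count equation yields $|\widehat{E}_{a,d_1}| \leq 22 - 9 - \bigl(NV_3(a,d_1) + |V_4(a,d_1)|\bigr) \leq 22 - 9 - 5 = 8$, and Proposition~\ref{prop:planar}(1) gives the contradiction; the argument for $d_2$ is symmetric, and Claim~\ref{clm:cl3} is not even needed.

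The concrete gap, then, is that the decisive counting step --- which is the entire proof --- is absent from your plan, while the difficulties you anticipate are spurious: no distribution of the neighbours of $d_1$ among the $b_i$ and the degree~3 vertices ever leaves $|\widehat{E}_{a,d_1}| = 9$, so the case $\widehat{G}_{a,d_1} \cong K_{3,3}$ and the restoring method never enter. Had you carried out the count you sketch only for the $|V_4(a,d_1)|=1$ case uniformly over all cases, you would have found $\leq 8$ every time and the case analysis would have collapsed.
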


\begin{proof}
Suppose instead that $c_1 \nadj d_1$.
Then $|\widehat{E}_{a,d_1}| \leq 8$ because $d_1 \nadj d_2$.
\end{proof}

Since $|\overline{E}(a)|=3$ and $|\overline{V}(a)|=6$, by Claims~\ref{clm:cl2} and \ref{clm:cl3}, 
we distinguish the induced subgraph $H$ of $G$ whose vertices set is $\overline{V}(a)$ into seven sub-cases 
according to the connection of three extra edges of $\overline{E}(a)$ as drawn in Figure~\ref{fig:7type}. 
In details, if $H$ has a vertex of degree 3, $d_1$ and $d_2$ must be vertices of degree 1, 
so we may say that $d_3$ is adjacent to $d_1,d_2,d_4$ as in Type I.
If $H$ has all disconnected three edges, it has Type II.
If $H$ has consecutive three edges, it has two Types III and IV because $d_1 \nadj d_2$.
Finally, $H$ has three Types V, VI and VII when only two edges are connected.
Recall that $c_1$ is adjacent to $d_1, d_2$.
Without loss of generality we assume that $d_1$ is adjacent to $b_1, b_2$.

\begin{figure}[h]
\includegraphics{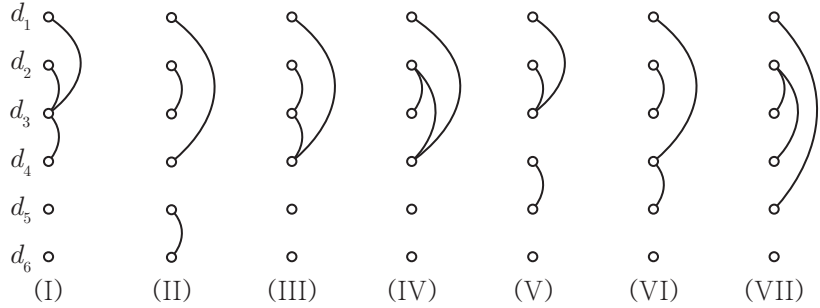}
\caption{Seven types according to the connection of $\overline{E}(a)$}
\label{fig:7type}
\end{figure}

For Type I, we claim that $d_2$ is adjacent to $b_3, b_4$.
Suppose instead that $d_2 \adj b_1$.
We have two choices for the remaining edge adjacent to $b_1$.
If $b_1 \adj d_4$, $\widehat{G}_{a,d_4}$ has at most ten edges, 
three edges among them connect $d_1$ and $d_2$.
If $b_1 \adj d_5$, $\widehat{G}_{a,d_5}$ has at most ten edges 
with a 2-cycle $(d_1 d_2)$ and a 3-cycle $(d_1 d_2 d_3)$.
Following the restoring method,
we construct $G_{a,d_1}$ as drawn in Figure~\ref{fig:TypeI-II} (a).
In this case, $b_3, b_4, d_4$ are in the same partition.
In the restored $G$, we have $\widehat{G}_{b_1,b_2}$ with nine edges and a 2-cycle $(b_3 b_4)$.

For Type II, $d_4$ is adjacent to $b_3, b_4$ since $G$ is triangle-free.
Following the restoring method,
we construct $G_{a,d_1}$ as drawn in Figure~\ref{fig:TypeI-II} (b).
In this case, without loss of generality, we assume that $d_2, b_3, d_5$ are in the same partition.
Then the restored $G$ is homeomorphic to Cousin 99 of the $E_9+e$ family
as drawn in Figure~\ref{fig:TypeI-II} (c).

\begin{figure}[h]
\includegraphics{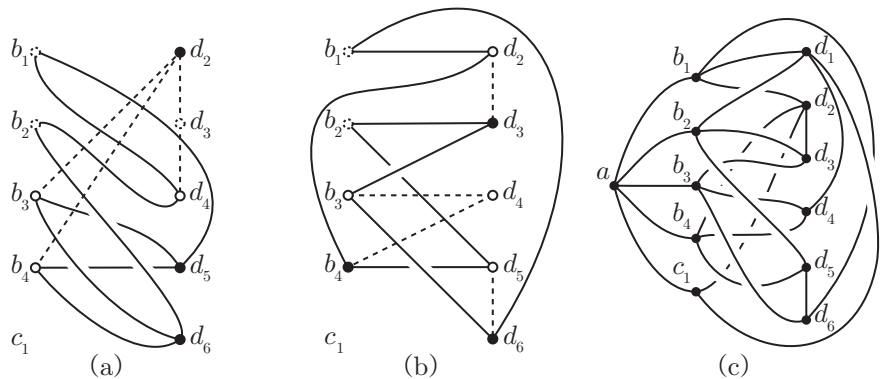}
\caption{Types I and II}
\label{fig:TypeI-II}
\end{figure}

For Type III, we consider the remaining edge adjacent to $d_3$.
If $d_3 \adj b_1$ (or similarly $b_2$), 
$\widehat{G}_{a,d_2}$ has at most nine edges with a 3-cycle $(b_1 d_1 d_4)$.
So $d_3 \adj b_3$ (or $b_4$), 
and so $\widehat{G}_{b_3,d_1}$ has at most nine edges with a degree 4 vertex~$a$.

For Type IV, we say that $d_4 \adj b_3$ since $G$ is triangle-free.
If $b_4 \nadj d_2$, $NV_3(b_4,d_1)=5$ and 
so $\widehat{G}_{b_4,d_1}$ has at most nine edges with a degree 4 vertex $a$.
Therefore $b_4$ is adjacent to $d_2, d_5, d_6$.
If $b_3 \adj d_3$, $\widehat{G}_{a,d_1}$ has at most nine edges with a 3-cycle $(b_3 d_2 d_3)$.
Thus $b_3$ is adjacent to $d_5, d_6$.
But, $\widehat{G}_{a,d_2}$ has at most nine edges with a 3-cycle $(b_3 d_5 d_6)$.

For Type V, we assume that $d_3 \adj b_3$ and further assume that $b_3$ is adjacent to $d_5, d_6$
since $G$ is triangle-free.
Following the restoring method,
we construct $G_{a,d_1}$ as drawn in Figure~\ref{fig:TypeV-VII} (a).
In this case, $d_2, d_5, d_6$ are in the same partition.
Then $\widehat{G}_{a, d_2}$ has nine edges with a 3-cycle $(b_3 d_5 d_6)$.

For Type VI, we assume that $d_4 \adj b_3$ and so $b_3 \adj d_6$.
Furthermore $b_4$ is adjacent to $d_5, d_6$.
If $d_2$ is adjacent to both $b_3$ and $b_4$,
then $\widehat{G}_{a, d_2}$ has nine edges with a 3-cycle $(d_4 d_5 d_6)$.
So $d_2$ is adjacent to $b_1$ or $b_2$.
Following the restoring method,
we construct $G_{a,d_1}$ as drawn in Figure~\ref{fig:TypeV-VII} (b).
In this case, $d_2, b_3, b_4$ are in the same partition.
Then $\widehat{G}_{a, d_2}$ has nine edges with a 3-cycle $(b_3 b_4 d_6)$.

For Type VII, $d_5$ should be adjacent to $b_3, b_4$.
Following the restoring method,
$G_{a,d_1}$ is constructed as drawn in Figure~\ref{fig:TypeV-VII} (c).
In this case, $d_2, b_3, d_6$ are in the same partition.
Then the restored $G$ is as drawn in Figure~\ref{fig:TypeV-VII} (d).
This graph is a previously unknown intrinsically knotted graph $U'_{12}$ in Figure~\ref{fig:new}.

\begin{figure}[h]
\includegraphics{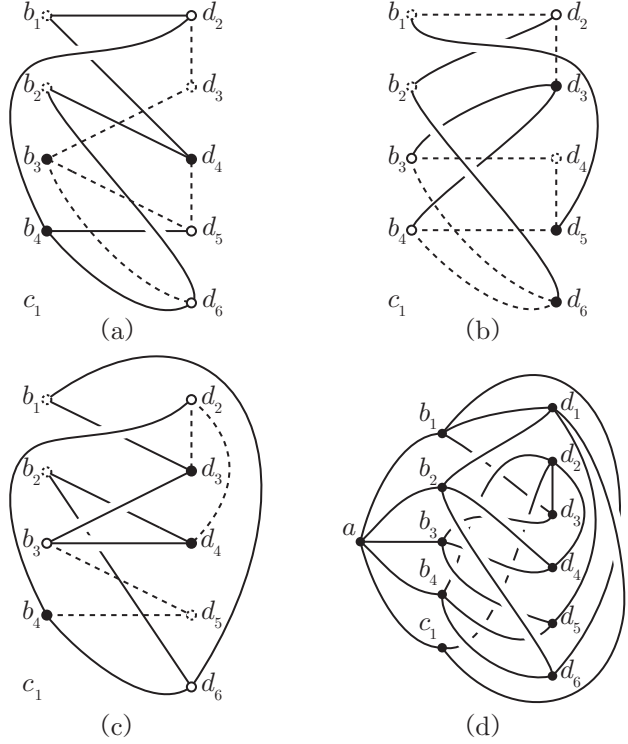}
\caption{Types V, VI and VII}
\label{fig:TypeV-VII}
\end{figure}

\subsection{$|V_4(a)| = 5$ case} \hspace{1cm}

The set $V(a)$ consists of degree 4 vertices $b_1, \dots, b_5$, and
the set $\overline{V}(a)$ consists of a degree 4 vertex $d_1$ and degree 3 vertices $d_2, \dots, d_6$. 
Now we distinguish into three cases according to the number of vertices of $V(a) \cap V(d_1)$.
First suppose that $|V(a) \cap V(d_1)|=2$.
Without loss of generality, we may assume that $d_1$ is adjacent to $b_1, b_2, d_2, d_3$.
We say that $b_1$ is adjacent to $d_4, d_5$.
If $b_2$ is adjacent to both $d_4$ and $d_5$,
$\widehat{G}_{a, d_1}$ has nine edges with a 2-cycle $(d_4 d_5)$,
so we assume that $b_2$ is adjacent to $d_5, d_6$.
Following the restoring method,
$G_{a,d_1}$ is constructed as drawn in Figure~\ref{fig:v4a5} (a).
In this case, $b_4, d_4, d_6$ are in the same partition.
Then $\widehat{G}_{b_4, b_5}$ has nine edges with a 3-cycle $(b_1 b_2 d_5)$.

\begin{figure}[h]
\includegraphics{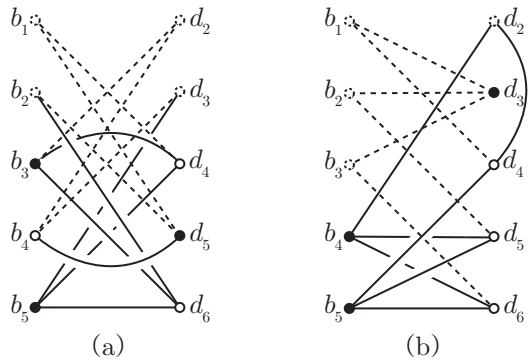}
\caption{Two cases for $|V_4(a)|=5$}
\label{fig:v4a5}
\end{figure}

Suppose that $|V(a) \cap V(d_1)|=3$.
We similarly assume that $d_1$ is adjacent to $b_1, b_2, b_3, d_2$.
If $|V_3(b_1, b_2)| = 0$, $\widehat{G}_{b_1, b_2}$ has nine edges with a degree 4 vertex $b_4$.
If $|V_3(b_1, b_2)| = 2$, $\widehat{G}_{a, d_1}$ has nine edges with a 2-cycle.
Therefore $|V_3(b_1, b_2)| = |V_3(b_1, b_3)| = |V_3(b_2, b_3)| = 1$.
If the three vertices of these three sets are all different, say $d_3, d_4, d_5$
(similar to Figure~\ref{fig:cl21} (a)),
then $\widehat{G}_{a, d_1}$ has nine edges with a 3-cycle $(d_3 d_4 d_5)$.
Therefore we may assume that $d_3$ is adjacent to $b_1, b_2, b_3$ (similar to Figure~\ref{fig:cl21} (b)).
Following the restoring method,
$G_{a,d_1}$ is constructed as drawn in Figure~\ref{fig:v4a5} (b).
In this case, $d_3, d_4, d_5$ are in the same partition.
Then $\widehat{G}_{b_1, b_4}$ has nine edges with a degree 4 vertex $b_5$.

Lastly, suppose that $|V(a) \cap V(d_1)|=4$ which are $b_1, b_2, b_3, b_4$.
As in the previous case, $|V_3(b_i,b_j)|=1$ for any different $i , j = 1,2,3,4$.
But, we can not construct the graph with this property.
Finally we complete the proof.

\end{document}